\newtheorem{thm}{Theorem}
\newtheorem{prop}[thm]{Proposition}
\newtheorem{lem}[thm]{Lemma}
\theoremstyle{remark}
\newtheorem{exam}[thm]{Example}
\newcommand{\barre}[1]{\overline{#1}}
\newcommand{\p}{\partial}
\newcommand{\R}{\mathbb{R}}
\newcommand{\B}{\mathbb{B}}
\newcommand{\boC}{\mathcal{C}}
\newcommand{\boM}{\mathcal{M}}
\renewcommand{\S}{\mathbb{S}}
\title{Free boundary minimal hypersurfaces outside of the ball}
\author{Laurent Mazet}
\address{Institut Denis Poisson, CNRS UMR 7013, Universit\'e de Tours, 
Universit\'e d'Orl\'eans, Parc de Grandmont, 37200 Tours, France}
\email{laurent.mazet@univ-tours.fr}
\author{Abra\~ao Mendes}
\address{Instituto de Matem\'atica, Universidade Federal de Alagoas, Macei\'o, AL, Brazil}
\email{abraao.mendes@im.ufal.br}
\thanks{L.M. was partially supported by the ANR-19-CE40-0014 grant. A.M. was partially supported by the National Council for
Scientific and Technological Development – CNPq, Brazil (Grant 305710/2020-6). The authors were partially supported by Coordena\c{c}\~ao de Aperfei\c{c}oamento de Pessoal de N\'ivel Superior – Brasil (CAPES-COFECUB 88887.143161/2017-0).}
\numberwithin{equation}{section}
\DeclareMathOperator{\Vol}{Vol}
\DeclareMathOperator{\Ind}{Ind}
\DeclareMathOperator{\Ric}{Ric}
\DeclareMathOperator{\divergent}{div}
\newcommand{\boL}{\mathcal{L}}
\newcommand{\Ome}{\Omega}
\newcommand{\eps}{\varepsilon}
\renewcommand{\div}{\divergent}
\begin{document}

\begin{abstract}
In this paper we obtain two classification theorems for free boundary minimal 
hypersurfaces outside of the unit ball (exterior FBMH for short) in 
Euclidean space. The first result states that the only exterior stable FBMH with parallel 
embedded regular ends are the catenoidal hypersurfaces. To achieve this we prove a 
B\^ocher type result for positive Jacobi functions on regular minimal ends in 
$\mathbb{R}^{n+1}$ which, after some calculations, implies the first theorem. The second 
theorem states that any exterior FBMH $\Sigma$ with one regular end is a catenoidal 
hypersurface. Its proof is based on a symmetrization procedure similar to 
R.~Schoen~\cite{Sch2}. We also give a complete description of the catenoidal hypersurfaces, 
including the calculation of their indices.
\end{abstract}

\maketitle

\section{Introduction}
Over the last few years, the study of free boundary minimal hypersurfaces (FBMH for short) has occupied a prominent place in differential geometry, especially the study of FBMH in the Euclidean unit ball $\B\subset\R^{n+1}$ (see \textit{e.g.} \cite{AmbCarSha18,AmCaSh,FraSch15,FraSch16,FraSch20,SmiSteTraZho,SmiZho} and the references therein).

In this paper, we deal with FBMH in $\R^{n+1}\setminus\B$ with compact boundary in 
$\p\B$, which are called \textsl{exterior free boundary minimal 
hypersurfaces}. These hypersurfaces are critical for the $n$-volume functional 
with respect to deformations that let the boundary on the unit sphere. For such critical 
points, the 
second order 
derivative of the volume functional is given by the so-called stability operator which here 
has a contribution from the boundary. In our situation, this contribution is nonnegative 
due to the concavity of the unit sphere with respect to its outside. An interesting 
question would be to understand the geometry and the topology of these hypersurfaces 
in terms of their indices. In the ball, this is the study made by L. Ambrozio, A. Carlotto and 
B. Sharp in \cite{AmCaSh}. A situation where non-compact FBMH have been studied is the 
case of Schwarzschild space: R. Montezuma 
\cite{Mont} and E. Barbosa and J.M. Espinar \cite{BarEsp} have looked at some properties of 
these hypersurfaces.

In $\R^{n+1}\setminus \B$, important examples of exterior FBMH are the \textsl{catenoidal hypersurfaces}, defined as 
exterior FBMH invariant by isometries fixing a straight line. In Section~\ref{sec:catenoid} 
we give a complete description of the catenoidal hypersurfaces and calculate their 
indices: some have index $0$ and others index $1$.

The aim of this paper is to prove two classification results for catenoidal hypersurfaces. The first classification theorem is the following. Definitions are given in Sections~\ref{sec:FBMH}~and~\ref{sec:stable}.

\begin{thm}\label{th:stab1}
Let $\Sigma$ be an exterior free boundary minimal hypersurface in $\R^{n+1}\setminus\B$. Let us assume that $\Sigma$ is stable and has parallel embedded regular ends. Then $\Sigma$ is a catenoidal hypersurface.
\end{thm}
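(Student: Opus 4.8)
The plan is to reduce the classification to the Bôcher-type result mentioned in the abstract, following the same philosophy that works for stable minimal hypersurfaces in $\R^{n+1}$ (Schoen, do Carmo–Peng, etc.) but keeping track of the boundary term.

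First, I would recall what stability gives us. Since $\Sigma$ is stable as an exterior FBMH, the stability operator $L = \Delta + |A|^2$ (no ambient Ricci term since the background is flat) has a nonnegative second variation when tested against functions satisfying the appropriate Robin-type boundary condition on $\p\Sigma \subset \p\B$; crucially the boundary contribution has a favorable sign because of the concavity of $\S^n$ seen from outside. Standard arguments (à la Fischer-Colbrie–Schoen) then produce a positive Jacobi function $u>0$ on $\Sigma$, i.e. $Lu = 0$ with the correct sign condition at the boundary. This is where the boundary concavity is essential: it is what allows a positive solution to exist globally including up to $\p\Sigma$, rather than just on the interior.

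Next comes the analytic heart: I would apply the Bôcher-type theorem for positive Jacobi functions on regular minimal ends in $\R^{n+1}$ (the result the authors announce proving). On each parallel embedded regular end of $\Sigma$, such an end is a graph over an exterior domain of a hyperplane with the asymptotics of a catenoidal end (or a plane); the Bôcher result should say that any positive solution of $Lu=0$ on such an end has a specific, controlled asymptotic expansion — roughly, $u$ behaves like a nonnegative combination of the bounded harmonic-type solution and the fundamental-solution-type singular solution of the linearized operator, i.e. $u = a + b\,G + o(\cdot)$ with $a \ge 0$, $b \ge 0$ in the appropriate sense. The point is to get enough rigidity on the growth of $u$ at infinity on every end.

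Finally, I would combine the global positive Jacobi function with its end asymptotics to force the geometry. The idea: the coordinate functions and the function $\langle x, N\rangle - (\text{linear})$, or more precisely the Jacobi fields coming from translations and from the one-parameter family of catenoids, are the natural competitors; comparing $u$ against these and using the maximum principle together with the boundary condition should show that $|A|^2$ and the geometry are exactly those of a catenoidal hypersurface — e.g. by showing $\Sigma$ carries a Jacobi field of a very restricted type, which by the description in Section~\ref{sec:catenoid} characterizes the catenoids. Concretely I expect one shows the ends must be catenoidal (not planar, unless $\Sigma$ is a hyperplane through the origin, which is not an exterior FBMH with boundary on $\p\B$... so planar ends get excluded or handled separately), all ends share the same axis by the parallel hypothesis, and then a unique-continuation / rigidity step closes the argument.

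The main obstacle I anticipate is twofold. The serious one is proving (or correctly applying) the Bôcher-type result: controlling positive solutions of a Schrödinger-type equation $\Delta u + |A|^2 u = 0$ on a minimal end where $|A|^2$ decays but is not zero, and extracting the precise leading asymptotics — this requires careful barrier constructions and separation-of-variables on the asymptotically catenoidal/planar model, plus a Harnack-type control near infinity. The secondary difficulty is bookkeeping at the boundary: making sure the Robin condition on $\p\Sigma$ is compatible with all the comparison functions used, so that the maximum principle can be invoked up to and including $\p\B$; the concavity sign is what saves this, but it must be used at every step rather than just to produce $u$.
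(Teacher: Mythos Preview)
Your first two steps are exactly what the paper does: stability gives a positive Jacobi function $v>0$ satisfying $\Delta v+\|B\|^2v=0$ with the Robin condition $\partial_\nu v+v=0$, and the B\^ocher-type theorem pins down its asymptotics on each regular end (either $v\to A>0$, or $v\sim B|X|^{-(n-2)}$ with $B>0$, or $v\sim A\ln|X|$ when $n=2$).

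The gap is in your third step. You propose comparing $v$ against Jacobi fields from translations or from the catenoid deformation family, hoping to pin down the geometry. That is too vague, and those particular Jacobi fields do not lead anywhere obvious: for instance, $(e_x,N)$ is not identically zero on a catenoid, so you cannot hope to prove it vanishes. The paper's mechanism is different and sharper. After rotating so that the common limiting normal of the parallel ends is $\pm e_z$, one takes the \emph{rotational} Killing field $K(X,z)=MX$ for an arbitrary skew-symmetric $M\in\mathcal{M}_n(\R)$ (rotations of the $X$-factor, fixing the $z$-axis). The function $u=(K,N)$ is Jacobi and satisfies the same Robin condition, since $K$ is tangent to $\partial\B$. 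The crucial computation, which uses that the ends are graphs with $N\to\pm e_z$, is that $u=O(|X|^{-(n-1)})$. By the B\^ocher asymptotics for $v$, this forces $u/v\to 0$ at infinity. Now $w=u/v$ satisfies $\Delta w+2(\nabla\ln v,\nabla w)=0$ with the \emph{Neumann} condition $\partial_\nu w=0$ (the Robin terms cancel in the quotient), so the maximum principle gives $w\equiv 0$. Hence $(K,N)\equiv 0$ for every such $K$, i.e.\ $\Sigma$ is invariant under all rotations about the $z$-axis and is therefore catenoidal.

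So the missing idea is: the right test function is the rotational Killing field, the parallel-ends hypothesis is used precisely to get the $O(|X|^{-(n-1)})$ decay of $(K,N)$, and the comparison is done via the quotient $u/v$ (which conveniently converts Robin into Neumann), not by matching $v$ itself to a model Jacobi field.
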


In order to prove Theorem~\ref{th:stab1}, we first prove that saying that $\Sigma$ is 
stable is equivalent to saying that there exists a positive Jacobi function $u$ on 
$\Sigma$ satisfying the Robin boundary condition $\p_\nu u+u=0$ on $\p\Sigma$. This 
is the content of Proposition~\ref{prop:stab}. Its proof is based on the proof of a classical 
stability characterization due to D. Fischer-Colbrie and R. Schoen~\cite[Theorem~1]{FCSc} 
for manifolds without boundary, where here we make use of the Harnack inequality for 
positive solutions to $\Delta u+qu=0$ on $\Sigma$ with Robin boundary condition 
$\p_\nu u+u=0$ on $\p\Sigma$ proved in Appendix~\ref{ap:harnack}. Second, we obtain 
a B\^ocher type theorem for positive Jacobi functions on regular minimal ends in 
$\R^{n+1}$ which, together with Proposition~\ref{prop:stab}, implies that $\Sigma$ is 
invariant by isometries fixing a straight line, in other words, $\Sigma$ is a catenoidal 
hypersurface.

As mentioned above, some of the catenoidal hypersurfaces have index $1$, so 
it 
would be interesting to know  if the index equal to $1$ implies that the hypersurface is 
catenoidal. When $n=2$, it would be a result similar to Lopez-Ros result \cite{LopRos} for 
boundaryless minimal surfaces. 
For 
example, it would be interesting to understand if a control on the index gives a control on 
the number of ends of the hypersurface. However, the positive contribution of the 
boundary to the stability operator seems to make this not an easy task.

The second classification theorem is the following.

\begin{thm}\label{th:one-ended}
Let $\Sigma$ be an exterior free boundary minimal hypersurface. If $\Sigma$ has one regular end, then $\Sigma$ is a catenoidal hypersurface.
\end{thm}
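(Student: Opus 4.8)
The plan is to adapt the reflection (Alexandrov moving plane) argument used by R.~Schoen in \cite{Sch2}, the sphere $\p\B$ playing here the role of Schoen's second end. The initial observation is that, since $\Sigma$ meets $\p\B$ orthogonally, for every hyperplane $P\subset\R^{n+1}$ through the origin the reflection $R_P$ across $P$ maps $\R^{n+1}\setminus\B$ onto itself, fixes $\p\B$, preserves orthogonality along $\p\B$, and hence carries exterior free boundary minimal hypersurfaces to exterior free boundary minimal hypersurfaces. So the hyperplanes through the center of $\B$ are the admissible mirrors, and the goal is to prove that $\Sigma$ is symmetric with respect to enough of them.

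First I would describe the regular end $E$ of $\Sigma$. By definition of a regular end, outside a compact set $E$ is a graph $x_{n+1}=u(x')$ over the exterior of a disk in a hyperplane which, after an ambient rotation, we take to be $\{x_{n+1}=0\}$; moreover $u$ has the expansion $u(x')=a\log|x'|+b+O(|x'|^{-1})$ if $n=2$ (resp.\ $u(x')=b+a|x'|^{2-n}+O(|x'|^{1-n})$ if $n\geq3$). In particular $\Sigma$ is proper and embedded near infinity, and the axis of the model catenoid to which $E$ is asymptotic is the line $\ell=\R e_{n+1}$ through the origin. If $a=0$, a sliding-hyperplane argument against the family $\{x_{n+1}=t\}$ combined with the free boundary condition forces $\Sigma$ to be the flat disk $\{x_{n+1}=0\}\cap(\R^{n+1}\setminus\B)$, a (degenerate) catenoidal hypersurface, so from now on I assume $a\neq0$. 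The crucial feature of the expansion is its rigidity: for every hyperplane $P\supset\ell$ we have $|R_Px'|=|x'|$, so $R_P$ fixes the model graph $\{x_{n+1}=a\log|x'|+b\}$; consequently $R_P(\Sigma)$ and $\Sigma$ have the same end asymptotics and are $C^1$-close near infinity.

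Next comes the moving-plane step. Fix a unit vector $v$ with $v\cdot e_{n+1}=0$, and for $t\geq0$ set $P_v^t=\{x\cdot v=t\}$ and $\Sigma_+^t=\Sigma\cap\{x\cdot v>t\}$. For $t$ large, $\Sigma_+^t$ is a graphical piece of $E$, and the expansion of $u$ shows that $R_{P_v^t}(\Sigma_+^t)$ lies strictly on one side of $\Sigma$; the same expansion shows the separation persists up to infinity, uniformly in $t$ on compact ranges, so no contact can escape to infinity. I then decrease $t$ and let $t_0\geq0$ be the first value at which strict separation fails. If $t_0>0$, then at the contact point $p$ one of three versions of the maximum principle applies --- the interior one if $p$ is an interior point; the boundary maximum principle along $P_v^{t_0}$ (Serrin's corner lemma) if $p\in P_v^{t_0}$; a Hopf-type lemma if $p\in\p\Sigma\subset\p\B$, using the free boundary condition and the fact that the second fundamental form of $\p\B$ with respect to $\R^{n+1}\setminus\B$ enters with the favorable sign --- and, together with unique continuation, it gives $R_{P_v^{t_0}}(\Sigma)=\Sigma$; but this would make $\Sigma$ symmetric with respect to a hyperplane not containing $\ell$, contradicting the rigidity above (since $a\neq0$). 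Hence $t_0=0$, and letting $t\to0^+$ and arguing once more with the genuine mirror $P_v^0\supset\ell$ --- for which the reflected hypersurface is again an exterior free boundary minimal hypersurface, so all three maximum principles apply without loss --- we conclude that $\Sigma$ is symmetric with respect to $P_v^0$.

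Since $v\in\ell^\perp$ was arbitrary, $\Sigma$ is symmetric with respect to every hyperplane containing $\ell$; as the reflections across the hyperplanes of $\ell^\perp\cong\R^n$ generate $O(n)$, $\Sigma$ is invariant under all rotations of $\R^{n+1}$ fixing $\ell$, that is, $\Sigma$ is a catenoidal hypersurface. I expect the main obstacle to be the boundary analysis in the moving-plane step: when the contact point $p$ lies on $\p\Sigma\subset\p\B$ one must compare $\Sigma$, which satisfies the free boundary (oblique/Robin) condition there, with the reflected hypersurface $R_{P_v^t}(\Sigma_+^t)$, near which $p$ is in general only an interior point, and extract the correct sign from the free boundary condition and the geometry of $\p\B$; a careful choice of boundary-fitted coordinates near such corner points, together with a suitable Hopf/Serrin boundary lemma in that setting, is the main technical difficulty. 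If embeddedness of $\Sigma$ is not already built into the definition of an exterior free boundary minimal hypersurface, it must also be established before the moving-plane argument can be run.
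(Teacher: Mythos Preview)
Your outline correctly identifies the strategy (Alexandrov reflection \`a la Schoen) and the correct target (symmetry with respect to every hyperplane containing $\ell=\R e_{n+1}$), but the moving-plane step as written has a genuine gap that you in fact anticipate at the end. The mirrors $P_v^t=\{x\cdot v=t\}$ with $t>0$ do \emph{not} pass through the origin, so $R_{P_v^t}$ does \emph{not} preserve $\partial\B$. You observed at the outset that only hyperplanes through the center are admissible, and then abandoned that restriction. The consequence is that for $0<t<1$ the reflected piece $R_{P_v^t}(\Sigma_+^t)$ carries a boundary arc $R_{P_v^t}(\partial\Sigma\cap\{x\cdot v>t\})$ lying on a sphere different from $\partial\B$; a first contact can occur between this reflected boundary arc and an \emph{interior} point of $\Sigma$, and no maximum principle (interior, Hopf, or Serrin corner) disposes of that case. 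The reverse situation --- $p\in\partial\Sigma$ touching an interior point of the reflected piece --- is precisely the case you flag as ``the main technical difficulty'', and there is no boundary lemma that closes it either: at $p$ the hypersurface $\Sigma$ is only a half-hypersurface meeting $\partial\B$ orthogonally, while the reflected piece is full, and Schwarz reflection across a sphere does not preserve minimality.

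The paper's remedy is to replace the sliding family by a \emph{rotating} family of mirrors through the origin: $\Pi_\theta=\{-x\sin\theta+z\cos\theta=0\}$ for $\theta\in[0,\frac\pi2]$, reflecting the lower half $\Sigma_\theta^-$. Since every $\Pi_\theta$ contains the origin, $S_\theta(\partial\B)=\partial\B$, boundary points go to boundary points and interior points to interior points, so the mismatch above simply cannot occur. To start the rotation one first needs $\Sigma\subset\{z>\eps\}$ for some $\eps>0$; this is obtained by sliding the horizontal planes $\{z=t\}$ upward (these touch $\partial\B$ tangentially, so the free boundary condition rules out boundary contact) together with a flux computation $-\int_{\partial\Sigma}\partial_\nu z=\int_{\partial\Sigma}(P,e_z)>0$ to pin down the sign of $a$ (and force $b>0$ when $n>2$). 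Your sentence ``if $a=0$ then sliding against $\{x_{n+1}=t\}$ gives the flat disk'' is the right idea but not enough: even when $a\neq0$ this preliminary step is what places $\Sigma$ strictly above $\{z=0\}$ and makes $\Sigma_{\theta_0}^-$ empty near $\partial\B$ for small $\theta_0$. After that, the contact analysis runs through four cases (interior/boundary of $\Sigma$, on/off $\Pi_\theta$); the delicate one, $p\in\partial\Sigma\cap\Pi_\theta$, is handled by Serrin's corner lemma together with the fact \eqref{eq:principal} that $-p$ is a principal direction of $\Sigma$ at a free-boundary point, which matches the Hessians of the two graphs at the corner.
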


The proof of Theorem~\ref{th:one-ended} is based on a symmetrization procedure as in Schoen's paper~\cite{Sch2}.

The paper is organized as follows. In Section~\ref{sec:FBMH} we present some 
preliminaries on FBMH and prove Proposition~\ref{prop:stab}. In Section~\ref{sec:stable} 
we state and prove an auxiliary B\^ocher type result (Theorem~\ref{th:bocher}) and 
present the proof of Theorem~\ref{th:stab1}. In Section~\ref{sec:catenoid} we introduce 
the catenoidal hypersurfaces and give a complete description of them, including the 
calculation of their indices. In Section~\ref{sec:one-ended} we prove 
Theorem~\ref{th:one-ended}. Finally, in Appendix~\ref{ap:harnack} we present a 
proof of 
the Harnack inequality for positive functions satisfying a Robin type boundary condition.

\section{Free boundary minimal hypersurfaces in $\R^{n+1}\setminus\B$}\label{sec:FBMH}

Let $\Sigma$ be an $n$-manifold with compact boundary. We say that $F:\Sigma\to\R^{n+1}\setminus\B$ is an \textsl{exterior proper immersion} if $F$ is a proper immersion and $F(\Sigma)\cap\partial\B=F(\partial\Sigma)$. In this paper, we always consider such exterior proper immersions. We also assume that $\Sigma$ is orientable so that a unit normal $N$ is well defined along $F$. Besides, we will often identify $\Sigma$ with its image $F(\Sigma)$ and just say that $\Sigma$ is an \textsl{exterior hypersurface}.

We will consider exterior hypersurfaces that are critical for the $n$-volume functional with respect to any deformations keeping the boundary on $\partial\B$. Such a hypersurface $\Sigma$ has vanishing mean curvature and meets $\partial\B$ orthogonally: we call them \textsl{exterior free boundary minimal hypersurfaces}.

Basic examples are given by cones over minimal hypersurfaces $S\subset\S^n=\partial\B$:
\[
\Sigma=\{tp\in\R^{n+1};p\in S\text{ and }t\ge1\}.
\]
One can also consider exterior FBMH that are invariant by isometries fixing a straight line: they are called \textsl{catenoidal hypersurfaces}. Their complete description is given in Section~\ref{sec:catenoid}.

Let $\Sigma$ be an exterior free boundary minimal hypersurface. The free boundary condition implies that, at $P\in\partial\Sigma$, the outgoing unit normal $\nu(P)=-P$ is a principal direction of the second fundamental form $B$ of $\Sigma$. Indeed, for $T\in T\partial\Sigma$, we have
\begin{align}\label{eq:principal}
B(\nu,T)=(D_T\nu,N)=-B_{\partial\B}(T,N)=-(T,N)=0,
\end{align}
where $D$ is the covariant derivative in $\R^{n+1}$ and $B_{\partial\B}$ is the second fundamental form of $\partial\B$.

\begin{figure}[!ht]
\centering
\includegraphics[scale=.975,trim={{.125\textwidth} 0 {.125\textwidth} 0},clip]{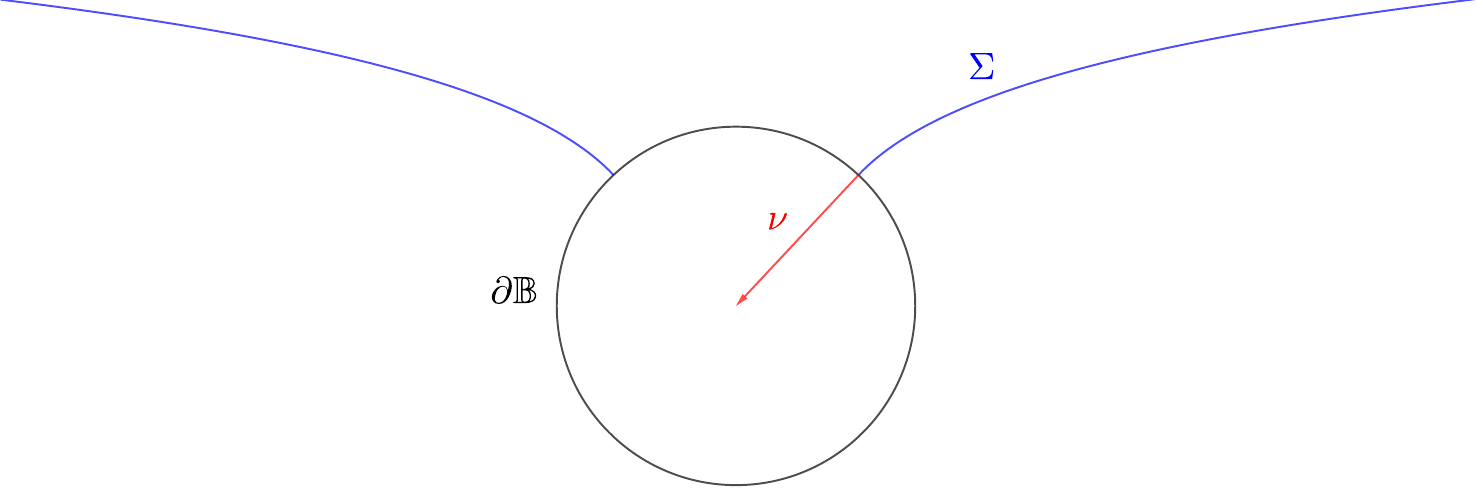}
\caption{Exterior FBMH}
\end{figure}

As in the boundaryless case, we also have a monotonicity formula for exterior free boundary minimal hypersurfaces:
\begin{align}\label{eq:monotonicity}
\frac{|\Sigma\cap B_R|}{R^n}-\left(1-\frac{1}{R^n}\right)\frac{|\p\Sigma|}{n}=\int_{\Sigma\cap B_R}\frac{|X^\perp|^2}{|X|^{n+2}},
\end{align}
where $B_R$ is the Euclidean ball centered at the origin of radius $R\ge1$. In fact, let $v(r)=|\Sigma\cap B_r|$, $r>1$. It follows from coarea formula that
\begin{align*}
\frac{d}{dr}v(r)=\int_{\Sigma\cap\p B_r}\frac{1}{|\nabla^\Sigma d|},
\end{align*}
where $d(X)=|X|$ is the distance function to the origin. On the other hand, because $\Sigma$ is minimal, $\div_\Sigma(X^\top)=n$. Therefore,
\begin{align*}
nv(r)=\int_{\Sigma\cap B_r}\div_\Sigma(X^\top)=\int_{\p\Sigma}(X^\top,\nu)+\int_{\Sigma\cap\p B_r}(X^\top,\nu)=-|\p\Sigma|+\int_{\Sigma\cap\p B_r}|X^\top|.
\end{align*}
This gives
\begin{align*}
\frac{d}{dr}\left(\frac{v(r)}{r^n}\right)=\frac{|\p\Sigma|}{r^{n+1}}+\int_{\Sigma\cap\p B_r}\frac{1}{r^n}\left(\frac{1}{|\nabla^\Sigma d|}-\frac{|X^\top|}{r}\right)=\frac{|\p\Sigma|}{r^{n+1}}+\int_{\Sigma\cap\p B_r}\frac{1}{|\nabla^\Sigma d|}\left(\frac{|X^\perp|^2}{r^{n+2}}\right),
\end{align*}
where above we have used that $\nabla^\Sigma d=\frac{X^\top}{|X|}$ and $|X^\perp|^2=r^2-|X^\top|^2$. Thus, integrating last equation from 1 to $R$ and using coarea formula, we obtain \eqref{eq:monotonicity}. 

\subsection{The stability operator}

Let $\{F_t\}$ be a family of exterior proper immersions of $\Sigma$ such that $F_0(\Sigma)$ is free boundary minimal and $\partial_tF_t$ has compact support. Even if the volume of $F_t(\Sigma)$ is infinite its derivatives can be computed since the deformation has compact support. Then the first derivative of the $n$-volume functional vanishes at $t=0$ and the second derivative at $t=0$ can be computed in terms of the function $u=(\partial_t{F_t}|_{t=0},N)$ by
\[
\frac{d^2}{dt^2}\Vol(F_t(\Sigma))\big|_{t=0}=Q(u,u)=\int_\Sigma(|\nabla^\Sigma u|^2-\|B\|^2u^2)d\mu+\int_{\partial\Sigma}u^2ds,
\]
where $\nabla^\Sigma$ and $d\mu$ are the gradient and the $n$-volume measure on $\Sigma$, and $ds$ is the $(n-1)$-volume measure on $\partial\Sigma$, all with respect to the metric induced by $F_0$ (see \cite{AmCaSh}). After integration by parts, one has 
\[
Q(u,u)=-\int_\Sigma u(\Delta 
u+\|B\|^2u)d\mu+\int_{\partial\Sigma}u(u+\partial_\nu u)ds.
\]
So the quadratic form $Q$ is associated with the Jacobi operator defined by $\boL u=\Delta u+\|B\|^2 u$. Then, for any bounded domain $\Ome$ in $\Sigma$, we can consider the associated spectrum of $\boL$: a sequence of eigenvalues $\lambda_n\nearrow+\infty$ and a $L^2$-orthonormal sequence of functions $u_n$ on $\Ome$ such that
\[
\begin{cases}
\Delta u_n+\|B\|^2 u_n=-\lambda_n u_n&\text{on }\Ome,\\
\partial_\nu u_n+u_n=0&\text{on }\partial\Sigma\cap\Ome,\\
u_n=0&\text{on }\partial\Ome\setminus\partial\Sigma.
\end{cases}
\]
The number of negative eigenvalues is then called the \textsl{index of $Q$ on $\Ome$} and is denoted by $\Ind(Q,\Ome)$. If $(\Ome_n)$ is an increasing sequence of domains such that $\cup\Ome_n=\Sigma$, then the limit of the increasing sequence $(\Ind(Q,\Ome_n))$ is called the \textsl{index of $\Sigma$} and is denoted by $\Ind(\Sigma)$.

When $\Ind(\Sigma)=0$, we say that $\Sigma$ is \textsl{stable} and this is equivalent to $Q(u,u)\ge 0$ for any function $u$ with compact support on $\Sigma$. Actually, we have an alternative characterization of stability given by the following Fischer-Colbrie and Schoen type result. 

\begin{prop}\label{prop:stab}
Let $\Sigma$ be an exterior free boundary minimal hypersurface. Then $\Sigma$ is stable if and only if there exists a positive solution $u$ on $\Sigma$ to
\begin{equation}\label{eq:stab}
\begin{cases}
\Delta u+\|B\|^2 u=0&\text{on }\Sigma,\\
\partial_\nu u+u=0&\text{on }\partial\Sigma.\\
\end{cases}
\end{equation}
\end{prop}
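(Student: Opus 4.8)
The plan is to adapt the classical argument of Fischer-Colbrie and Schoen to the free boundary setting. For the easy direction, suppose a positive solution $u$ to \eqref{eq:stab} exists. Given a compactly supported test function $\varphi$ on $\Sigma$, I would write $\varphi = u\psi$ and compute $Q(\varphi,\varphi)$. Using $\Delta u + \|B\|^2 u = 0$ on $\Sigma$ and the Robin condition $\p_\nu u + u = 0$ on $\p\Sigma$, an integration by parts should yield $Q(u\psi,u\psi) = \int_\Sigma u^2|\nabla^\Sigma\psi|^2\,d\mu \ge 0$, with the boundary term $\int_{\p\Sigma}(u\psi)^2\,ds$ being exactly cancelled by the contribution coming from $\p_\nu u = -u$. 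Hence $\Sigma$ is stable.

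For the converse, assume $\Sigma$ is stable. The strategy is to exhaust $\Sigma$ by an increasing sequence of smooth bounded domains $\Ome_k$ with $\cup_k\Ome_k = \Sigma$, and on each $\Ome_k$ solve the mixed boundary value problem
\[
\begin{cases}
\Delta u_k + \|B\|^2 u_k = 0 & \text{on }\Ome_k,\\
\p_\nu u_k + u_k = 0 & \text{on }\p\Sigma\cap\Ome_k,\\
u_k = 1 & \text{on }\p\Ome_k\setminus\p\Sigma.
\end{cases}
\]
Stability of $\Sigma$ — equivalently $\lambda_1(Q,\Ome_k) > 0$, or at least $\ge 0$, for every $k$, which must be argued via strict positivity on the slightly larger domain $\Ome_{k+1}$ — guarantees that this problem is solvable and that the solution $u_k$ is positive on $\Ome_k$; indeed the operator is invertible when the first eigenvalue is positive, and the maximum principle (together with the Hopf-type boundary point lemma adapted to the Robin condition, using that the sign in $\p_\nu u + u = 0$ is favorable) forces $u_k > 0$. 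Normalizing $u_k$ by its value at a fixed base point $p_0 \in \Sigma$, i.e. replacing $u_k$ by $u_k/u_k(p_0)$, I then pass to a limit: by the Harnack inequality from Appendix~\ref{ap:harnack} — which crucially covers points on $\p\Sigma$ where only the Robin condition is available — the normalized $u_k$ are locally uniformly bounded above and below on any fixed compact set, once $k$ is large enough. Elliptic estimates (interior plus boundary Schauder estimates for the Robin problem) then give local $C^{2,\alpha}$ bounds, so a diagonal/subsequence argument produces a limit $u > 0$ on all of $\Sigma$ solving \eqref{eq:stab}.

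The main obstacle is the passage to the limit at the boundary $\p\Sigma$. Without a Harnack inequality valid up to $\p\Sigma$ for solutions satisfying the Robin condition $\p_\nu u + u = 0$, one cannot rule out that the normalized sequence $u_k$ degenerates (tends to $0$ or blows up) near a boundary point, which is why Appendix~\ref{ap:harnack} is invoked precisely here. A secondary technical point is justifying solvability and positivity of the finite-domain problems: one needs $\lambda_1 > 0$ rather than merely $\ge 0$ on each $\Ome_k$, which is obtained by enlarging the domain slightly (strict monotonicity of $\lambda_1$ under domain inclusion) and using that $\Sigma$ is stable, hence $\lambda_1(Q,\Ome_{k+1}) \ge 0$ implies $\lambda_1(Q,\Ome_k) > 0$. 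Once these two points are in place, the remainder of the argument is the standard Fischer-Colbrie--Schoen normalization-and-compactness scheme, with all boundary terms behaving well thanks to the sign of the curvature contribution of $\p\B$.
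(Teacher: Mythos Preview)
Your proposal is correct and follows essentially the same approach as the paper: adapt the Fischer--Colbrie--Schoen argument, using the Robin condition $\partial_\nu u=-u$ (equivalently $\partial_\nu\ln u=-1$) to make the boundary terms cancel in the direction ``positive solution $\Rightarrow$ stable,'' and invoking the boundary Harnack inequality of Appendix~\ref{ap:harnack} to pass to the limit in the direction ``stable $\Rightarrow$ positive solution.'' The only cosmetic difference is that you construct the approximating $u_k$ by solving the mixed problem with Dirichlet data $1$ rather than taking first eigenfunctions; both variants are standard and yield the same conclusion.
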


\begin{proof}
The proof is very similar to that one of \cite[Theorem~1]{FCSc} by Fischer-Colbrie and Schoen. In fact, in order to adapt their proof, we just need to observe that, if $u$ is as in \eqref{eq:stab}, then $\partial_\nu\ln u=-1$ on $\p\Sigma$ (this is for the part $(iii)\implies (i)$) and use the Harnack inequality given in Proposition~\ref{prop:harnack} in Appendix~\ref{ap:harnack} (for the part $(ii)\implies (iii)$).
\end{proof}

When $n=2$, Fischer-Colbrie's result \cite[Theorem~2]{Fis} gives that an exterior free boundary minimal surface $\Sigma$ has finite index if and only if it has finite total curvature. One difference is that, in our case, the quadratic form $Q$ does not depend only on the Gauss map but also on the conformal factor along the boundary $\p\Sigma$. A second important point is that we assume $\partial\Sigma$ to be compact. For example, if $\Sigma$ is stable, we have a solution $u$ to \eqref{eq:stab} which can be lifted to the universal cover $\widetilde\Sigma$. This implies that the associated quadratic form on $\widetilde\Sigma$ is nonnegative. However, the universal cover may not have finite total curvature as we are going to see below (see Example~\ref{exam:universal_cover}). Actually, the universal cover is not properly immersed and thus it is not an exterior surface according to our definition.

\subsection{Regular ends}

The asymptotic of an exterior free boundary minimal hypersurface can be highly complicated. A simple asymptotic is given by \textsl{regular ends} introduced by Schoen in \cite{Sch2}.

In order to describe it, we split $P\in\R^{n+1}$ as $(X,z)\in\R^n\times\R$. Then an end $E$ of an exterior free boundary minimal hypersurface is said to be \textsl{regular} if, after an isometry, a representative of $E$ is given by the graph of a function $f$ of bounded gradient defined on $\{|X|\ge R\}$ with the following asymptotic:
\begin{gather}
\label{eq:regular}
f(X)=A\ln|X|+B+(C,X)|X|^{-2}+O(|X|^{-2})\quad\text{if }n=2,\\
\label{eq:regular2}
f(X)=B+A|X|^{-(n-2)}+(C,X)|X|^{-n}+O(|X|^{-n})\quad\text{if }n>2,
\end{gather}
where $A,B\in\R$ and $C\in\R^n$. We notice that the above estimate on $f$ implies similar estimates on its derivatives (see \cite{Sch2}). For example, one sees that $\nabla f(X)$ goes to $0$ as $|X|$ goes to $+\infty$ and, in particular, there is a well-defined unit normal at $\infty$ for such an end.

If $n=2$ and $\Sigma$ has finite total curvature, then $\Sigma$ is conformally equivalent to a compact Riemann surface with boundary minus a finite number of points. Moreover, a properly embedded annular end with finite total curvature is regular \cite[Proposition~1]{Sch2}.

In the case $3\le n\le 6$, following the arguments of J. Tysk \cite{Tys}, if we 
assume that $\Sigma$ has finite index and finite volume growth in the sense that 
$\lim_{R\to+\infty}R^{-n}|\Sigma\cap B_R|<+\infty$, then $\Sigma$ has finitely many 
ends, all of them being regular.

\section{Stable hypersurfaces}\label{sec:stable}

\subsection{A B\^ocher type result for the Jacobi operator}

In this section, we analyze the asymptotic behavior of positive Jacobi functions (\textit{i.e.} solutions to $\boL u=0$) on regular ends.

\begin{thm}\label{th:bocher}
Let $E$ be a regular minimal end in $\R^{n+1}$ (let $X\in\R^n$ be a coordinate 
associated to the end as in \eqref{eq:regular} and \eqref{eq:regular2}) and consider a 
positive Jacobi function 
$u$ on $E$. Then $u$ has the following asymptotic: there exist $A,B\in\R$ such that
\begin{gather*}
u(X)=A\ln|X|+B+v(X)\quad\text{if }n=2,\\
u(X)=A+B|X|^{-(n-2)}+v(X)\quad\text{if }n>2,
\end{gather*}
where  $v$ is 
such 
that the function $|X|^{n-1}v$ is $C^2$-bounded on $\R^n\setminus B_R$. Moreover, 
either $A>0$ or $A=0$ and $B>0$.
\end{thm}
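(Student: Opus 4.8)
The plan is to perform a separation of variables on the regular end $E$, exploiting the fact that a regular end is asymptotically a graph over the plane $\{|X|\ge R\}$ with $f$ of bounded gradient decaying at the given rate. First I would pull back the Jacobi equation $\Delta_E u + \|B\|^2 u = 0$ to the coordinate $X\in\R^n\setminus B_R$, using the parametrization $X\mapsto (X,f(X))$. In these coordinates the induced metric is $g_{ij}=\delta_{ij}+\p_if\p_jf$, and the estimates \eqref{eq:regular}--\eqref{eq:regular2} (together with their derivative versions) give $g_{ij}=\delta_{ij}+O(|X|^{-2n+2})$ when $n>2$ and a logarithmic-derivative error when $n=2$, while $\|B\|^2=O(|X|^{-2n})$. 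So the Jacobi operator is a small perturbation of the flat Laplacian on $\R^n$ near infinity: $\Delta_E u = \Delta_{\R^n} u + (\text{lower-order terms with fast-decaying coefficients})$. The point is that $\|B\|^2$ and the metric corrections decay fast enough that, when we expand $u$ in spherical harmonics on $\S^{n-1}$, the equation for each Fourier mode is, to leading order, the Euler equation $r^{n-1}(r^{n-1}\phi_k')' \sim \ell_k(\ell_k+n-2)$-type, whose two homogeneous solutions are $r^0$ and $r^{-(n-2)}$ for the zero mode (and $r$, $r^{-(n-1)}$ for the first nonzero modes, etc.).

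Next I would set up the argument rigorously. Write $u$ in terms of its spherical average and the remainder: let $a(r)=\fint_{\S^{n-1}}u(r\omega)\,d\omega$ and $w=u-a$. The ODE for $a$ picks up a forcing term from the interaction of $u$ with the non-constant part of $\|B\|^2$ and the metric error, but all of those contribute terms decaying like $|X|^{-n}$ times a bounded function. Solving the inhomogeneous Euler ODE by variation of parameters, one gets $a(r)=A\ln r + B + (\text{error of size }r^{-(n-1)}$, $C^2$) when $n=2$, and $a(r)=A + Br^{-(n-2)} + (\text{error of size }r^{-(n-1)}$, $C^2$) when $n>2$ — here the key is that the particular solution against a right-hand side of order $r^{-n}$ is of order $r^{-(n-2)}\cdot r^{-1}$ at worst, well inside the claimed error $|X|^{n-1}v$ bounded. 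For the higher modes $w$, standard elliptic estimates on dyadic annuli (rescaling $B_{2R}\setminus B_R$ to the unit annulus, applying interior and boundary Schauder/$L^2$ estimates to the perturbed equation, and using that the perturbation is small) give a decay $|w(X)| \le C|X|^{-\alpha}$ for some $\alpha$; a Fourier-mode-by-mode analysis shows the slowest-decaying mode among the nonconstant ones decays at least like $|X|^{-(n-1)}$ (the first spherical harmonic mode has homogeneous solutions $r$ and $r^{1-n}$, and the growing solution $r$ is excluded because $u>0$ forces at most logarithmic/bounded growth via the Harnack-type control and the sub-solution comparison — see the last paragraph). So $w$ is absorbed into $v$ with $|X|^{n-1}w$ being $C^2$-bounded, and one combines $a$ and $w$ to conclude the stated asymptotic expansion with $v$ satisfying the bound.

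The positivity statement — either $A>0$, or $A=0$ and $B>0$ — I would get by a barrier/maximum-principle argument. Since $u>0$ and $u(X)=A\ln|X|+B+o(1)$ (resp. $A+B|X|^{-(n-2)}+o(1)$), letting $|X|\to\infty$ forces $A\ge0$ (for $n=2$) or $A=\lim u\ge 0$ (for $n>2$). Suppose $A=0$. Then for $n>2$, $u(X)=B|X|^{-(n-2)}+o(|X|^{-(n-2)})$, so again $B\ge 0$; to exclude $B=0$ I would argue that $B=0$ would force $u$ to decay faster than $|X|^{-(n-2)}$, i.e. like $|X|^{-(n-1)}$, and then compare $u$ with a positive supersolution of the form $c|X|^{-(n-2)}$ on the end: since $\Delta_E(|X|^{-(n-2)}) = O(|X|^{-2n})$ is negligible compared to $\|B\|^2|X|^{-(n-2)}=O(|X|^{-2n+2})$ — wait, these are comparable — so more carefully I would use that $|X|^{-(n-2)}$ is a positive Jacobi-supersolution up to a controlled error and invoke the strong maximum principle / Hopf lemma on the end together with the boundary behavior to force $u$ to be bounded below by a positive multiple of $|X|^{-(n-2)}$, contradicting $B=0$. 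The case $n=2$ with $A=0$ is analogous: $u\to B$, so $B\ge0$, and $B=0$ would make $u$ a positive Jacobi function decaying to zero on the end, which one rules out by comparison with the constant supersolution (using $\|B\|^2 \ge 0$ and that $\int \|B\|^2 < \infty$ on a finite-total-curvature end, so $1$ is a positive supersolution up to an integrable error — equivalently, the operator $\Delta + \|B\|^2$ has a positive bounded Jacobi solution tending to $1$, and two positive solutions one of which tends to $0$ and the other to a positive constant cannot both exist by the maximum principle on the end).

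The main obstacle I expect is the second paragraph: showing that \emph{no} nonconstant spherical-harmonic mode grows (in particular ruling out the $r$-growing first mode) and that all nonconstant modes decay at the sharp rate $|X|^{-(n-1)}$ uniformly, since the perturbation terms couple the modes and one must run a careful iteration (a Moser–Nash or De Giorgi type scheme adapted to the end, or a weighted-space fixed-point argument) to upgrade the crude decay from elliptic estimates to the sharp $C^2$ bound on $|X|^{n-1}v$. The positivity of $u$ is what kills the growing modes, so the interplay between the a priori Harnack control from the earlier sections and the mode analysis is the delicate point.
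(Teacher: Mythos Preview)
Your overall strategy --- pass to polar coordinates, split $u$ into its spherical mean $\bar u$ and an oscillatory remainder, analyze the resulting ODE for $\bar u$, and treat the remainder mode by mode --- is exactly what the paper does (after writing $|X|=e^t$, so the end carries the metric $e^{2t}(\delta+O(e^{-2(n-1)t}))$ and the Jacobi equation becomes $u_{tt}+(n-2)u_t+\Delta^\sigma u+M(u)=0$ with $M$ of size $e^{-2(n-1)t}$). The difference is that the paper makes the Harnack inequality the \emph{central} estimate rather than a side remark: uniform ellipticity in $(t,p)$ gives $\|u\|_{C^0([t-1,t+1]\times\S^{n-1})}\le C\bar u(t)$ for all $t$, and this one inequality does almost everything. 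It bounds the perturbation $M(u)$ by $Ce^{-2(n-1)t}\bar u(t)$, so the ODE for $\bar u$ closes on itself; and it forces every nonzero Fourier coefficient $u_i(t)=\int u\,v_i$ to be $O(\bar u)=O(t)$, which immediately kills the growing solution $e^{\mu_+t}$ in each mode. No iteration, no weighted fixed-point argument, no worry about mode coupling --- the ``main obstacle'' you flag simply disappears once Harnack is placed front and center. The $L^2$ sum over modes then gives $\|u-\bar u\|_{L^2}=O(e^{-(n-1)t})$, and Schauder upgrades this to $C^2$.

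Your positivity argument, however, has a real gap. The paper does not use barriers or comparison here; it is a short ODE argument. For $n>2$ one writes $(\bar u,\bar u')$ in the basis of the homogeneous solutions $1$ and $e^{-(n-2)t}$, obtaining coefficients $a(t),b(t)$ with $\bar u=a+b$, and checks directly that $\partial_t\big(e^{nt}\sqrt{a^2+b^2}\big)\ge 0$ for large $t$; hence $e^{nt}\sqrt{a^2+b^2}$ cannot tend to $0$. But if $A=B=0$ the variation-of-parameters formulas force $e^{nt}a,\,e^{nt}b\to 0$, a contradiction; so $(A,B)\neq(0,0)$. Since $A=\lim_{t\to\infty}\bar u\ge 0$ and $\bar u>0$, one reads off $A>0$ or ($A=0$ and $B>0$). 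The case $n=2$ is identical with $e^{3t/4}\sqrt{\bar u^2+\bar u_t^2}$ in place of $e^{nt}\sqrt{a^2+b^2}$. Your attempted comparison with $|X|^{-(n-2)}$ runs into exactly the order-matching problem you noticed mid-sentence, and the claim that ``two positive Jacobi solutions, one tending to $0$ and one to a positive constant, cannot coexist by the maximum principle'' is not a valid argument as stated (there is no such maximum principle on a noncompact end without further structure). Replace that paragraph with the ODE monotonicity above and the proof goes through.
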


\begin{proof}
Writing $X=e^tp$ with $t\in\R$ and $p\in\S^{n-1}$, a regular end can be parametrized by $[t_0,+\infty)\times\S^{n-1}$ with a metric $g$ having the asymptotic $g=e^{2t}(\delta +O(e^{-2(n-1)t}))$, where $\delta$ is the product metric on $\R\times\S^{n-1}$. Moreover, the second fundamental form can be estimated by $\|B\|^2=O(e^{-2nt})$. Thus the Jacobi operator can be computed as
\[
\Delta u+\|B\|^2 u=e^{-2t}\left(u_{tt}+(n-2)u_t+\Delta^\sigma u+M(u)\right),
\]
where $\Delta^\sigma$ is the Laplacian on $\S^{n-1}$ and $M(u)$ is a second order linear operator whose coefficients have $C^{0,\alpha}$-norm bounded by $Ce^{-2(n-1)t}$ for some constant $C>0$.

Therefore a Jacobi function $u$ satisfies
\begin{equation}\label{eq:bocher}
u_{tt}+(n-2)u_t+\Delta^\sigma u+M(u)=0,
\end{equation}
which is a uniformly elliptic equation on $[t_0,+\infty)\times\S^{n-1}$. As a consequence, by Harnack inequality (\cite[Corollary~8.21]{GiTr}), there is a constant $C>0$ such that, for any $p,q\in\S^{n-1}$ and $t,s\ge t_0+1$ with $|t-s|\le 1$, and any positive Jacobi function $u$, we have
\[
u(t,p)\le C u(s,q).
\]
By Schauder's elliptic estimates (\cite[Corollary~6.3]{GiTr}), we also have
\begin{equation}\label{eq:schauder}
\|u\|_{C^{2,\alpha}([t-\frac12,t+\frac12]\times\S^{n-1})}\le C\|u\|_{C^0([t-1,t+1]\times\S^{n-1})}\quad\mbox{for}\quad t\ge t_0+2.
\end{equation}
Let us define $\bar u(t)=\frac1{|\S^{n-1}|}\int_{\S^{n-1}}u(t,p)d\sigma$. By Harnack inequality, we obtain
\begin{equation}\label{eq:harnack}
\|u\|_{C^0([t-1,t+1]\times\S^{n-1})}\le C\min_{p\in\S^{n-1}}u(t,p)\le C\bar u(t).
\end{equation}
Then, combining with \eqref{eq:schauder}, there is a constant $C>0$ such that
\begin{equation}\label{eq:estimM1}
\|M(u)\|_{C^{0,\alpha}([t-\frac12,t+\frac12]\times\S^{n-1})}\le Ce^{-2(n-1)t}\bar u(t)
\end{equation}
and $\barre{M}(u)(t)=\frac1{|\S^{n-1}|}\int_{\S^{n-1}}M(u)(t,p)d\sigma$ satisfies
\begin{equation}\label{eq:estimM2}
\|\barre M(u)\|_{C^{0,\alpha}([t-\frac12,t+\frac12])}\le Ce^{-2(n-1)t}\bar u(t).
\end{equation}

By integrating \eqref{eq:bocher} over $\S^{n-1}$, we obtain that $\bar u$ solves $\bar 
u_{tt}+(n-2)\bar u_t+\barre M(u)=0$. Considering first the case $n>2$, let $a$ and $b$ 
be two functions such that
\[
\begin{pmatrix}
\bar u\\ \bar u'
\end{pmatrix}=a\begin{pmatrix}
1\\0
\end{pmatrix}+b\begin{pmatrix}
1\\2-n
\end{pmatrix}.
\]
Then we have the system
\[
\left\{
\begin{array}{rcl}
a'&=&-\frac1{n-2}\barre M(u)(t),\\
b'&=&-(n-2) b+\frac1{n-2}\barre M(u)(t).
\end{array}\right.
\]
Using the above equations, we obtain
\[
\partial_t\sqrt{a^2+b^2}\le C|\barre M(u)(t)|\le Ce^{-2(n-1)t}\bar u(t)\le 
Ce^{-2(n-1)t}\sqrt{a^2+b^2}. 
\]
Thus $\sqrt{a^2+b^2}$ and $\bar u$ stay bounded on $[t_0,+\infty)$. In particular, 
$|\barre M(u)(t)|\le Ce^{-2(n-1)t}$. Also,
\begin{align*}
\partial_t\left(e^{nt}\sqrt{a^2+b^2}\right)&\ge 
ne^{nt}\sqrt{a^2+b^2}-(n-2)e^{nt}\frac{b^2}{\sqrt{a^2+b^2}}-Ce^{-(n-2)t}\sqrt{a^2+b^2}\\
&\ge (2-Ce^{-2(n-1)t})e^{nt}\sqrt{a^2+b^2}\ge 0
\end{align*}
for $t$ sufficiently large. Therefore $e^{nt}\sqrt{a^2+b^2}$ cannot converge to $0$ at $t$ goes to $+\infty$. We can also solve the system to obtain
\[
\left\{
\begin{array}{rcl}
a&=&A+\int_t^{+\infty}\frac1{n-2}\barre M(u)(s)ds,\\
b&=&Be^{-(n-2)t}-e^{-(n-2)t}\int_t^{+\infty}\frac1{n-2}e^{(n-2)s}\barre M(u)(s)ds.
\end{array}\right.
\]
We notice that if $A=B=0$ then $\lim_{t\to +\infty} e^{nt}a=\lim_{t\to+\infty} e^{nt}b=0$, 
which is not possible. Then we can be sure that either $A$ or $B$ is nonzero.  As $\bar 
u=a+b$ is positive and $A=\lim_{t\to+\infty}(a+b)$, then either $A>0$ or $A=0$ and 
$B>0$. Observe that $\bar u-A-Be^{-(n-2)t}=O(e^{-2(n-1)t})$.

If $n=2$, we notice that
\begin{align*}
\partial_t\sqrt{\bar u^2+\bar u_t^2}&\le\frac{\bar u\bar u_t}{\sqrt{\bar u^2+\bar u_t^2}}+C|\barre{M}(u)|\le\frac12\sqrt{\bar u^2+\bar 
u_t^2}+Ce^{-2t}\bar u\\
&\le\left(\frac12+Ce^{-2t}\right)\sqrt{\bar u^2+\bar u_t^2}.
\end{align*}
Thus $\bar u=O(e^{\frac12 t})$ and then $\barre M(u)=O(e^{-\frac32 t})$. We also have
\[
\partial_t\left(e^{\frac34t}\sqrt{\bar u^2+\bar u_t^2}\right)\ge 
\left(\frac34-\frac12-Ce^{-2t}\right)e^{\frac34 t}\sqrt{\bar u^2+\bar u_t^2}\ge 0
\]
for $t$ sufficiently large. So $e^{\frac34t}\sqrt{\bar u^2+\bar u_t^2}$ cannot converge to $0$ as $t$ goes to $+\infty$. By integrating the equation on $\bar u$, one gets
\[
\bar u(t)=At+B-\int_t^{+\infty}\left(\int_s^{+\infty}\barre M(u)(r)dr\right)ds.
\]
If $A$ and $B$ vanish, then $\bar u,\bar u_t=O(e^{-\frac32t})$, which is not possible. 
Then either $A>0$ or $A=0$ and $B>0$. Notice that $\bar u-At-B=O(te^{-2t})$. In fact, 
last equation gives that $\bar u=O(t)$. Then, from \eqref{eq:estimM2} we have $\barre 
M(u)=O(te^{-2t})$. Therefore, using last equation again, we obtain $\bar 
u-At-B=O(te^{-2t})$.

In both cases, we have $M(u)=O(te^{-2(n-1)t})$.

Now, to conclude, we need to estimate $u-\bar u$. Let $v_i$ be a $L^2$-unit eigenfunction for the Laplace operator on $\S^{n-1}$ associated to a nonzero eigenvalue $\lambda$ (in particular, $\lambda\ge n-1$). Let $u_i=\int_{\S^{n-1}}uv_id\sigma$. Equation \eqref{eq:bocher} implies
\[
{u_i}_{tt}+(n-2){u_i}_t-\lambda u_i=-\int_{\S^{n-1}}M(u)v_id\sigma=f_i=O(te^{-2(n-1)t}).
\]
Observe that $\mu^2+(n-2)\mu-\lambda=0$ has two roots: $\mu_+\ge 1$ and $\mu_-\le -(n-1)$. Then, solving the above equation, we obtain
\begin{equation}\label{eq:u^i}
u_i(t)=e^{\mu_+ t}\left(a_i-\int_t^{+\infty}e^{-\mu_+s}\frac{f_i(s)}{\mu_+-\mu_-}ds\right)+e^{\mu_-t}\left(b_i-\int_{t_0}^te^{-\mu_-s}\frac{f_i(s)}{\mu_+-\mu_-}ds\right)
\end{equation}
for some $a_i,b_i\in\R$. Using \eqref{eq:harnack} and the fact that $\bar u=O(t)$, we have $u_i=O(t)$ and thus $a_i=0$. We also have
\[
b_i=e^{-\mu_- t_0}u_i(t_0)+e^{(\mu_+-\mu_-)t_0}\int_{t_0}^{+\infty}e^{-\mu_+ s}\frac{f_i(s)}{\mu_+-\mu_-}ds.
\]
Now, by Cauchy-Schwarz,
\begin{gather*}
\left(\int_t^{+\infty}e^{-\mu_+ s}f_i(s)ds\right)^2\le\frac1{\mu_+}e^{-\mu_+ t}\int_t^{+\infty} e^{-\mu_+s}f_i^2(s)ds,\\
\left(\int_{t_0}^te^{-\mu_-s}f_i(s)ds\right)^2\le\frac1{-\mu_-}(e^{-\mu_-t}-e^{-\mu_-t_0})\int_{t_0}^te^{-\mu_-s}f_i^2(s)ds.
\end{gather*}
Thus, by squaring \eqref{eq:u^i}, we obtain
\begin{align*}
u_i^2(t)\le16\bigg(&\frac{e^{\mu_+t}}{\mu_+(\mu_+-\mu_-)^2}\int_t^{+\infty}e^{-\mu_+ s} f_i^2(s)ds+e^{2\mu_-(t-t_0)}u_i^2(t_0)\\
&+\frac{e^{2\mu_-(t-t_0)+\mu_+t_0}}{\mu_+(\mu_+-\mu_-)^2}\int_{t_0}^{+\infty}e^{-\mu_+ s} f_i^2(s)ds+\frac{e^{\mu_-t}}{-\mu_-(\mu_+-\mu_-)^2}\int_{t_0}^te^{-\mu_-s}f_i^2(s)ds\bigg).
\end{align*}

Let us define 
\begin{gather*}
\widetilde U(t)=\int_{\S^{n-1}}(u(t,p)-\bar u(t))^2d\sigma,\\
\widetilde M(t)=\int_{\S^{n-1}}(M(u)(t,p)-\barre{M}(u)(t))^2d\sigma.
\end{gather*}
Using that $\mu_+\ge 1$ and $\mu_-\le-(n-1)$, we can sum the above inequalities with respect to $i$ to obtain
\begin{align*}
\widetilde U(t)\le16\bigg(&e^{\mu_+t}\int_t^{+\infty}e^{-\mu_+s}\widetilde M(s)ds+e^{2\mu_-(t-t_0)}\widetilde U(t_0)\\
&+e^{2\mu_-(t-t_0)+\mu_+t_0}\int_{t_0}^{+\infty}e^{-\mu_+s}\widetilde M(s)ds+e^{\mu_-t}\int_{t_0}^te^{-\mu_-s}\widetilde M(s)ds\bigg).
\end{align*}
Since $\widetilde M(t)=O(t^2e^{-4(n-1)t})$, it follows that
\[
\|u-\bar u\|_{L^2([t-1,t+2]\times\S^{n-1})}=O(e^{-(n-1)t}).
\]
Actually, $u-\bar u$ solves the equation
\[
z_{tt}+(n-2)z_t+\Delta^\sigma z+M(z)=\barre M(u)-M(\bar u).
\]
Then, combining the above $L^2$-estimate with \eqref{eq:estimM1} and \eqref{eq:estimM2}, Schauder's estimates give
\[
\|u-\bar u\|_{C^2([t,t+1]\times\S^{n-1})}=O(e^{-(n-1)t}).
\]
We have then proved that
\begin{gather*}
\|u-A-Be^{-(n-2)t}\|_{C^2([t,t+1]\times\S^{n-1})}=O(e^{-(n-1)t})\quad\text{if }n>2,\\
\|u-At-B\|_{C^2([t,t+1]\times\S^{n-1})}=O(e^{-t})\quad\text{if }n=2.
\end{gather*}
This gives the expected result after going back to the original coordinate system.
\end{proof}

\subsection{Classification of stable hypersurfaces}

If $\Sigma$ is an exterior free boundary minimal hypersurface with regular ends, the unit normal to $\Sigma$ has a well-defined limit at each end. Then we say that such a hypersurface has \textsl{parallel ends} if these limits coincide up to a sign. We notice that, if $\Sigma$ is embedded, then its ends are always parallel.

Now, we are going to use the above B\^ocher type theorem in order to give a classification 
of stable exterior FBMH with parallel regular ends.

\begin{proof}[Proof of Theorem~\ref{th:stab1}]
Consider the $(X,z)$ coordinate system on $\R^{n+1}$. After an isometry, we can assume that the unit normal to the ends of $\Sigma$ are given by $\pm e_z$. 

Now, let $M\in\boM_n(\R)$ be a skew-symmetric matrix and consider the Killing vector field $K(X,z)=MX$. Notice that $K$ generates isometries fixing the $z$-axis. Then the scalar product $u=(K,N)$ is a solution to $\Delta u+\|B\|^2u=0$ on $\Sigma$. Moreover, since $K$ is tangent to $\partial\B$, $u$ satisfies $\partial_\nu u+u=0$ on $\p\Sigma$.

Each end of $\Sigma$ can be parametrized by the graph of a function $f$ with the asymptotic given by \eqref{eq:regular} or \eqref{eq:regular2} (depending on $n$). In particular,
\[
N(X,f(X))=\pm\frac1{\sqrt{1+|\nabla f(X)|^2}}(-\nabla f(X)+e_z).
\]
So the asymptotic of $f$ gives that $u=O(|X|^{-(n-1)})$.

On the other hand, since $\Sigma$ is stable, there is a positive solution $v$ to \eqref{eq:stab}. The asymptotic of $v$ is given by Theorem~\ref{th:bocher}. As a consequence, we see that $u(X)/v(X)$ goes to $0$ as $|X|$ goes to~$+\infty$. Also, the function $w=u/v$ satisfies
\[
\begin{cases}
\Delta w+2(\nabla\ln v,\nabla w)=0&\text{on }\Sigma,\\
\partial_\nu w=0&\text{on }\partial\Sigma.
\end{cases}
\]
Therefore the maximum principle gives that $w=u/v$ is constant and thus equals zero. This implies that $u=0$ and then $\Sigma$ is invariant by the isometries generated by $K$. So $\Sigma$ is a catenoidal hypersurface.
\end{proof}

\section{Catenoidal hypersurfaces}\label{sec:catenoid}

Theorem~\ref{th:stab1} gives that stable hypersurfaces are invariant by isometries fixing an axis. In this section, we describe this kind of exterior free boundary minimal hypersurfaces $\Sigma$. We fix the axis to be $\R e_z$.

Let $\phi$ be a primitive of the function $r\mapsto (r^{2(n-1)}-1)^{-1/2}$ defined on $[1,+\infty)$. The hypersurface
\[
\Sigma=\{(X,z)\in\R^n\times\R;|X|\ge1\text{ and }z^2=\phi^2(|X|)\}
\]
is a minimal hypersurface invariant by isometries fixing $\R e_z$. Actually, any connected piece of a minimal hypersurface invariant by isometries fixing $\R e_z$ is a subset of $\lambda\Sigma+\mu e_z$ for some $\lambda,\mu\in\R$.

Half of $\Sigma$ can be parametrized by the map
\[
F:\begin{array}{ccc}
[1,+\infty)\times\S^{n-1}&\to&\R^{n+1}\\
(r,p)&\mapsto&(rp,\phi(r))
\end{array}.
\]

Given $\alpha\in(0,\frac\pi2)$, we look for a rotational exterior free boundary minimal hypersurface with boundary in $\{z=\sin\alpha\}$. Let $R_\alpha=(\sin\alpha)^{-1/(n-1)}$ be such that $\phi'(R_\alpha)=\tan\alpha$. Notice that $R_\alpha$ decreases with $\alpha$ from $+\infty$ to $1$. Let $\boC_\alpha$ be the hypersurface parametrized by
\[
F_\alpha:\begin{array}{ccc}
[R_\alpha,+\infty)\times\S^{n-1}&\to&\R^{n+1}\\
(r,p)&\mapsto&\lambda_\alpha(rp,\phi(r))+\mu_\alpha e_z
\end{array},
\]
where $\lambda_\alpha$ and $\mu_\alpha$ are chosen such that $\boC_\alpha$ has the expected boundary:
\begin{equation}\label{eq:lambdamu}
\begin{cases}
\lambda_\alpha=R_\alpha^{-1}\cos\alpha=(\sin\alpha)^{\frac{1}{n-1}}\cos\alpha,\\
\mu_\alpha=\sin\alpha-\lambda_\alpha\phi(R_\alpha).
\end{cases}
\end{equation}
The hypersurface $\boC_\alpha$ has free boundary because of the choice of $R_\alpha$. Thus, for any $\alpha\in[0,\frac\pi2)$, there is exactly one rotational exterior free boundary minimal hypersurface: $\boC_\alpha$ for $\alpha\neq 0$ and $\boC_0=\{z=0\}\setminus\B$ for $\alpha=0$.

Therefore $\boC_\alpha$ is an exterior free boundary minimal catenoidal hypersurface and any connected exterior free boundary minimal catenoidal hypersurface is the image of some $\boC_\alpha$ by a linear isometry of $\R^{n+1}$.

\begin{figure}[!ht]
\centering
\includegraphics[scale=.975]{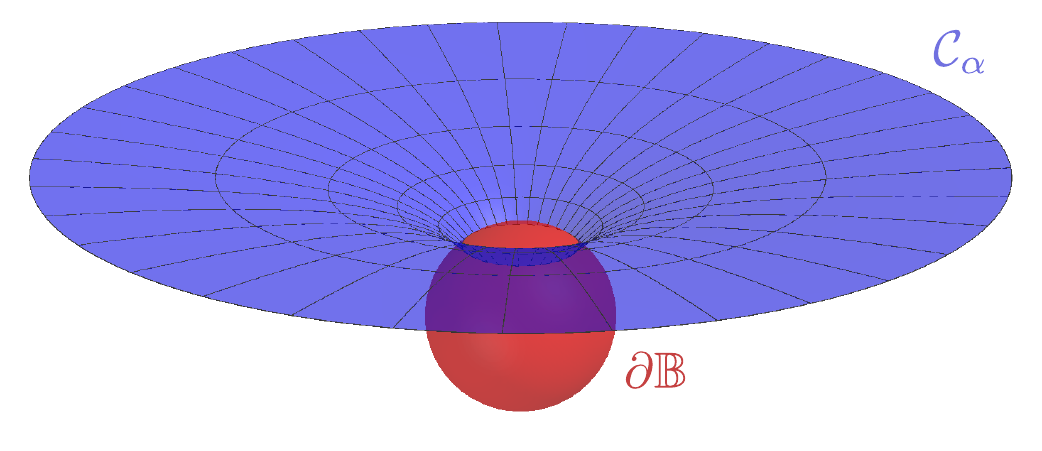}
\caption{Catenoidal hypersurface $\boC_\alpha$, $\alpha\approx0.92$}
\end{figure}

Observe that
\begin{equation}\label{eq:derlambda}
\partial_\alpha\lambda_\alpha=(\sin\alpha)^{-\frac{n-2}{n-1}}\left(\frac1{n-1}\cos^2\alpha-\sin^2\alpha\right).
\end{equation}

Let $\alpha_n=\arctan(\frac1{\sqrt{n-1}})$. It follows that, on $[0,\alpha_n]$, $\lambda_\alpha$ is increasing from $0$ to $\lambda_{\alpha_n}$ and, on $[\alpha_n,\frac\pi2)$, $\lambda_\alpha$ is decreasing up to $0$. Moreover, $\boC_\alpha$ converges to the hyperplane $\{z=1\}$ as $\alpha$ goes to $\frac\pi2$.

The stability properties of $\boC_\alpha$ is described by the following result, whose proof is inspired by the computation of the index of catenoids in $\R^{n+1}$ by L.-F. Tam and D. Zhou~\cite{TaZh}.

\begin{prop}
There exists $\bar\alpha_n\in[\alpha_n,\frac\pi2)$ such that $\boC_\alpha$ is stable for $\alpha\in[0,\bar\alpha_n)$, and $\boC_\alpha$ has index $1$ for $\alpha\in(\bar\alpha_n,\frac\pi2)$.

Actually, $\bar\alpha_2=\alpha_2=\frac\pi4$ and $\bar\alpha_n>\alpha_n$ for $n>2$.
\end{prop}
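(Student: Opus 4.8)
The plan is to exploit the rotational invariance of $\boC_\alpha$ to split the index computation into a family of one-dimensional Robin problems, in the spirit of Tam--Zhou~\cite{TaZh}. Parametrizing $\boC_\alpha$ by $(r,p)\in[R_\alpha,+\infty)\times\S^{n-1}$ as in the construction and decomposing test functions into $\S^{n-1}$-spherical harmonics (eigenvalue $\mu_k=k(k+n-2)$ of $-\Delta^\sigma$, multiplicity $m_k$), one gets $Q(\psi,\psi)=\sum_{k\ge0}Q_k(\psi_k)$, where $Q_k$ is a one-dimensional quadratic form on $[R_\alpha,+\infty)$ with a boundary term at $r=R_\alpha$ and the Robin condition reads $\psi_k'(R_\alpha)=R_\alpha^{-1}\psi_k(R_\alpha)$. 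Since $\mu_k$ enters $Q_k$ through a nonnegative term, $N_k:=\Ind(Q_k)$ is non-increasing in $k$ and $\Ind(\boC_\alpha)=\sum_k m_kN_k$. I will use repeatedly the identity, valid for any positive solution $v$ of $\boL v=0$ and any compactly supported $\eta$,
\[
Q(v\eta,v\eta)=\int_{\boC_\alpha}v^2|\nabla^{\boC_\alpha}\eta|^2\,d\mu+\int_{\p\boC_\alpha}(\p_\nu v+v)\,v\,\eta^2\,ds ,
\]
with its two consequences: if $v>0$ satisfies $\p_\nu v+v=0$ then $Q\ge0$ on the corresponding sector; if $v>0$ merely solves $\boL v=0$ then $Q\ge0$ on the codimension-one subspace $\{\psi:\psi|_{\p\boC_\alpha}=0\}$, so the index of that sector is at most $1$.

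First I would show $N_k=0$ for $k\ge1$, so that $\Ind(\boC_\alpha)=N_0$. Let $K$ generate an ambient rotation about a line through the origin orthogonal to $e_z$; being tangent to $\p\B$, the function $u=(K,N)$ solves $\boL u=0$ and $\p_\nu u+u=0$. Along $\boC_\alpha$ a direct computation gives $u=-p_i\,h(r)$ with $h(r)=\bigl(z(r)\phi'(r)+\lambda_\alpha r\bigr)(1+\phi'(r)^2)^{-1/2}$, where $z(r)=\lambda_\alpha\phi(r)+\mu_\alpha$ is the height, so $u$ lies in the $k=1$ sector. As $z(R_\alpha)=\sin\alpha>0$, $z'=\lambda_\alpha\phi'>0$ and $\phi'>0$ on $[R_\alpha,+\infty)$, the profile $h$ is positive there; the first consequence above gives $Q_1\ge0$, whence $N_k=0$ for $k\ge1$ by monotonicity. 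Since also $v_1:=(e_z,N)=(1+\phi'^2)^{-1/2}>0$ is a $k=0$ Jacobi function, the second consequence gives $N_0\le1$.

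Next I would identify the radial Jacobi function detecting $N_0$. The $k=0$ Jacobi functions are spanned by $v_1$ and by $v_2:=(\phi-r\phi')(1+\phi'^2)^{-1/2}$ (arising from scalings about the origin). Because all $\boC_\alpha$ are free boundary minimal with boundary on $\p\B$, the normal speed of the family, $\xi_\alpha=(\p_\alpha\lambda_\alpha)\,v_2+(\p_\alpha\mu_\alpha)\,v_1$, is a $k=0$ Jacobi function satisfying $\p_\nu\xi_\alpha+\xi_\alpha=0$. By the Sturm oscillation theorem for the half-line Robin problem (whose essential spectrum is $[0,+\infty)$, the end being a regular end with $\|B\|^2$ decaying rapidly), $N_0(\alpha)$ equals the number of zeros of $\xi_\alpha$ in $(R_\alpha,+\infty)$; combined with $N_0\le1$ this forces $\xi_\alpha$ to have at most one zero. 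Plugging in \eqref{eq:lambdamu}, \eqref{eq:derlambda}, $R_\alpha=(\sin\alpha)^{-1/(n-1)}$ and the elementary identities $\lambda_\alpha R_\alpha=\cos\alpha$, $\lambda_\alpha\phi'(R_\alpha)\,\p_\alpha R_\alpha=-\tfrac1{n-1}\cos\alpha$, a short computation gives $\xi_\alpha(R_\alpha)=1$. Hence, since $\xi_\alpha(R_\alpha)>0$ and $\xi_\alpha$ has at most one zero, $N_0(\alpha)=0$ when $c_\infty(\alpha):=\lim_{r\to+\infty}\xi_\alpha(r)>0$ and $N_0(\alpha)=1$ when $c_\infty(\alpha)<0$, the limit existing in $[-\infty,+\infty]$ by the asymptotics of $v_1,v_2$ (Theorem~\ref{th:bocher}).

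Finally I would analyse the sign of $c_\infty$. A computation yields $c_\infty(\alpha)=\tfrac1{n-1}\bigl[(\sin\alpha)^{-(n-2)/(n-1)}(1-n\sin^2\alpha)\,\Phi(\alpha)+n\cos\alpha\bigr]$, where $\Phi(\alpha):=\int_{R_\alpha}^{+\infty}(r^{2(n-1)}-1)^{-1/2}\,dr\in(0,+\infty]$ is finite exactly for $n\ge3$, and $\alpha_n$ satisfies $\sin^2\alpha_n=\tfrac1n$. On $[0,\alpha_n]$ the factor $1-n\sin^2\alpha$ is $\ge0$, so all summands are nonnegative and $c_\infty>0$; in particular $c_\infty(\alpha_n)=\tfrac n{n-1}\cos\alpha_n>0$. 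On $(\alpha_n,\tfrac\pi2)$, using $\Phi'(\alpha)=(\sin\alpha)^{-1/(n-1)}/(n-1)$ and $\tfrac1{n-1}=1-\tfrac{n-2}{n-1}$, one checks that every term of $(n-1)\,\p_\alpha c_\infty$ is strictly negative, so $c_\infty$ decreases strictly on $[\alpha_n,\tfrac\pi2)$ from $c_\infty(\alpha_n)>0$ to $c_\infty(\tfrac\pi2^-)=-\Phi(\tfrac\pi2)<0$. (For $n=2$ one has $\Phi\equiv+\infty$; then $c_\infty$ has the sign of $\p_\alpha\lambda_\alpha=\cos2\alpha$ away from $\alpha=\tfrac\pi4$, while $c_\infty(\tfrac\pi4)>0$.) Thus $c_\infty$ vanishes at a unique $\bar\alpha_n\in[\alpha_n,\tfrac\pi2)$, with $c_\infty>0$ on $[0,\bar\alpha_n)$ and $c_\infty<0$ on $(\bar\alpha_n,\tfrac\pi2)$, proving stability for $\alpha<\bar\alpha_n$ and index $1$ for $\alpha>\bar\alpha_n$; moreover $\bar\alpha_2=\alpha_2=\tfrac\pi4$ by the $n=2$ discussion, and $\bar\alpha_n>\alpha_n$ for $n>2$ exactly because $c_\infty(\alpha_n)>0$. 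The main obstacles are the identification of $\xi_\alpha$ as the Robin--Jacobi function of the family, the clean value $\xi_\alpha(R_\alpha)=1$ (which reduces everything to the sign of $c_\infty$), and the half-line Sturm count together with the control of the asymptotics of radial Jacobi functions at the regular end.
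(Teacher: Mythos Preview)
Your proposal is correct and follows essentially the same approach as the paper: both use the Killing Jacobi field $(K,N)$ to kill the non-radial directions, the positive Jacobi field $(e_z,N)$ (equivalently, graph stability) to cap the radial index at $1$, and the family variation $\xi_\alpha=\partial_\alpha F_\alpha\cdot N$ together with the sign of its limit at infinity to locate $\bar\alpha_n$. The only cosmetic difference is packaging---you phrase the reduction via an explicit spherical-harmonic decomposition and Sturm oscillation, while the paper argues via half-catenoid stability and a symmetry argument on the second eigenfunction---but the Jacobi functions, the value $\xi_\alpha(R_\alpha)=1$, and the monotone sign analysis of $c_\infty(\alpha)$ are identical.
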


\begin{proof}
We first study some preliminary stability properties of $\boC_\alpha$. Notice that $\boC_\alpha$ is a graph over part of $\R^n$. Therefore $\boC_\alpha$ is stable as a graph with fixed boundary: the stability operator is nonnegative for any test functions that vanish on $\partial\boC_\alpha$.

Let us consider on $\R^n$ coordinates $(x,Y)\in\R\times\R^{n-1}$. Let $K(x,Y,z)=(-z,0,x)$ be the Killing vector field generating rotations around 
$\{x=0,z=0\}$ in $\R^{n+1}$. Then the scalar product $k=(K,N)$ defines on $\boC_\alpha$ a solution to \eqref{eq:stab}. The boundary condition comes from the fact that $K$ is tangent to $\partial\B$. Actually, one can compute $k$ in the $(r,p)$ coordinates. By \eqref{eq:lambdamu}, we have
\begin{align*}
k=\frac{1}{\sqrt{1+(\phi'(r))^2}}\left((\lambda_\alpha\phi(r)+\mu_\alpha)\phi'(r)+\lambda_\alpha r\right)p_x,
\end{align*}
where $p_x$ is the $x$ coordinate of $p\in\S^{n-1}\subset\R\times\R^{n-1}$. Observe that, by \eqref{eq:lambdamu},
\begin{align*}
(\lambda_\alpha\phi(r)+\mu_\alpha)\phi'(r)+\lambda_\alpha r&\ge(\lambda_\alpha\phi(R_\alpha)+\mu_\alpha)\phi'(r)+\lambda_\alpha R_\alpha\\
&=\sin\alpha\;\phi'(r)+\cos\alpha>0.
\end{align*}
Hence $k$ has constant sign when $p_x$ has constant sign. This implies that the half catenoidal hypersurfaces $\boC_\alpha\cap\{\pm x\ge 0\}$ are stable.

Let us now study the global stability of $\boC_\alpha$. We have a one-parameter family 
$\{\boC_\alpha\}$ of catenoidal hypersurfaces. Therefore the derivative with respect to 
$\alpha$ gives a deformation field whose scalar product with the unit normal to 
$\boC_\alpha$ is a function $u$ which solves \eqref{eq:stab}. In the 
$F_\alpha$ parametrization and for the upward pointing unit normal, $u$ can 
be computed as
\[
u=\frac1{\sqrt{1+\phi'(r)^2}}(-\partial_\alpha\lambda_\alpha r\phi'(r)+\partial_\alpha\lambda_\alpha\phi(r)+\partial_\alpha\mu_\alpha).
\]
So $u$ depends only on $r$ and is equal to $1$ on $\partial\boC_\alpha$, \textit{i.e.} at $r=R_\alpha$. In order to study the sign of $u$ close to 
$r=+\infty$, let us have a look on $\lambda_\alpha$. By \eqref{eq:derlambda}, we have
\[
\partial_\alpha^2\lambda_\alpha=-\frac{\cos\alpha\;(\sin\alpha)^{-\frac{2n-3}{n-1}} 
\left((n^2+n-2)\sin^2\alpha+(n-2)\cos^2\alpha\right)}{(n-1)^2}
\le 0.\]
Thus $\partial_\alpha\lambda_\alpha$ is decreasing.

If $n=2$, we have $\lim_{r\to+\infty}\phi(r)=+\infty$. Then, for $\alpha\neq\alpha_2=\frac\pi4$, we have $\lim_{r\to+\infty}u=\pm\infty$ depending on $\textrm{sign}(\partial_\alpha\lambda_\alpha)$: close to $r=+\infty$, $u$ is positive for $\alpha<\alpha_2$ and negative for $\alpha>\alpha_2$.

If $n>2$, we have $\lim_{r\to+\infty}\phi(r)<+\infty$ and, by \eqref{eq:lambdamu},
\begin{align*}
\lim_{r\to +\infty} u&=\partial_\alpha\mu_\alpha+\partial_\alpha\lambda_\alpha\lim_{r\to+\infty}\phi(r)\\
&=\frac n{n-1}\cos\alpha+\partial_\alpha\lambda_\alpha\left(\lim_{r\to+\infty}\phi(r)-\phi(R_\alpha)\right).
\end{align*}
This limit is positive when $\alpha\le \alpha_n$. Moreover, when $\alpha\ge 
\alpha_n$, the limit is decreasing with $\alpha$ and negative for $\alpha$ close to 
$\frac\pi2$. Then there exists $\bar\alpha_n>\alpha_n$ such that the limit is positive 
for $\alpha<\bar\alpha_n$ and negative for $\alpha>\bar\alpha_n$.

Thus, for $\alpha<\bar\alpha_n$, $u$ is positive on $\partial\boC_\alpha$ and close to the infinity. Therefore, if $u$ changes sign on $\boC_\alpha$, $\{u<0\}$ would be a precompact subdomain of $\boC_\alpha$ with $u=0$ on its boundary, but this would contradict the stability of $\boC_\alpha$ as a graph. Hence, for $\alpha<\bar\alpha_n$, $u$ is positive and then, by Proposition~\ref{prop:stab}, $\boC_\alpha$ is stable. $\boC_{\bar\alpha_n}$ is also stable as limit of stable minimal hypersurfaces.

When $\alpha>\bar\alpha_n$, $u$ changes sign on $\boC_\alpha$. Thus there is $A>R_\alpha$ such that $u$ is nonnegative on $[R_\alpha,A]\times\S^{n-1}$ and vanishes on $\{A\}\times\S^{n-1}$. This implies that $\boC_\alpha$ has index at least $1$. We notice that there is no value $B>A$ such that $u$ vanishes on $\{B\}\times\S^{n-1}$. Indeed, this would contradict that $[A,+\infty)\times\S^{n-1}$ is stable as a graph.

Let us now prove that, for $\alpha>\bar\alpha_n$, $\boC_\alpha$ has index $1$. If it is 
not the case, then there is $B>R_\alpha$ such that the Jacobi operator has index at least 
$2$ on $[R_\alpha,B]\times\S^{n-1}$. Let us consider $u_2$ the eigenfunction 
associated to the second eigenvalue $\lambda_2<0$ on 
$\boC_\alpha(B)=F_\alpha([R_\alpha,B]\times\S^{n-1})$: $u_2$ is a solution 
to 
\[
\begin{cases}
\Delta u_2+\|B\|^2u_2=-\lambda_2 u_2&\text{on }\boC_\alpha(B),\\
\partial_\nu u_2+u_2=0 &\text{on }\partial\boC_\alpha,\\
u_2=0&\text{on }r=B.\\
\end{cases}
\]
We are going to prove that $u_2$ depends only on the $r$ variable. As above, let us consider $(x,Y)$ coordinates on $\R^n$ and let $S$ be the symmetry of $\R^{n+1}$ with respect to $\{x=0\}$. $\boC_\alpha(B)$ is invariant by $S$ and then we can consider on it the function $v$ defined by $v(p)=u_2(p)-u_2(S(p))$. $v$ is then a solution to 
\[
\begin{cases}
\Delta v+\|B\|^2v=-\lambda_2 v&\text{on }\boC_\alpha(B),\\
\partial_\nu v+v=0 &\text{on }\partial\boC_\alpha,\\
v=0&\text{on }r=B.\\
\end{cases}
\]
Moreover, we have $v=0$ on $\boC_\alpha(B)\cap\{x=0\}$. If $v\neq 0$, this implies that $\boC_\alpha(B)\cap\{x\ge 0\}$ is unstable since $\lambda_2<0$, which contradicts the stability of $\boC_\alpha\cap\{x\ge0\}$. So $v\equiv0$ and $u_2$ is invariant by $S$. Changing the choice of the $x$ coordinate, we obtain that $u_2$ is invariant by isometries fixing the $z$-axis and then depends only on $r$.

As $u_2$ is associated to the second eigenvalue, $u_2$ must change sign. Then there is $C\in (R_\alpha,B)$ such that $u_2=0$ on $\{r=C\}$. As $\lambda_2<0$, this implies that $\{C\le r\le B\}$ is unstable, which contradicts the stability of $\boC_\alpha$ as a graph. Hence $\boC_\alpha$ has index $1$ for $\alpha>\bar\alpha_n$.
\end{proof}

\begin{exam}\label{exam:universal_cover}
Consider the universal cover of $\boC_\alpha$ for $n=2$:
\begin{align*}
F_\alpha:\begin{array}{ccc}
[R_\alpha,+\infty)\times\R&\to&\boC_\alpha\subset\R^3\\
(r,\theta)&\mapsto&\lambda_\alpha(r\cos\theta,r\sin\theta,\phi(r))+\mu_\alpha e_z
\end{array}.
\end{align*}
Straightforward computations give that the area element and the Gaussian curvature of $F_\alpha$ are given by
\begin{gather*}
d\mu_\alpha=\lambda_\alpha^2r(1+(\phi')^2)^{\frac12}\,dr d\theta,\\
K_\alpha=\frac{\phi'\phi''}{\lambda_\alpha^2r(1+(\phi')^2)^2}.
\end{gather*}
Therefore,
\begin{gather*}
K_\alpha d\mu_\alpha=\frac{\phi'\phi''}{(1+(\phi')^2)^{\frac32}}\,drd\theta=-\frac{drd\theta}{r^2\sqrt{r^2-1}}.
\end{gather*}
Thus, the total curvature of $\boC_\alpha$ is given by
\begin{align*}
2\int_0^{2\pi}\left(\int_{R_\alpha}^{+\infty}\frac{dr}{r^2\sqrt{r^2-1}}\right)d\theta=4\pi\int_{R_\alpha}^{+\infty}\frac{dr}{r^2\sqrt{r^2-1}}=4\pi(1-\cos\alpha),
\end{align*}
while the total curvature of its universal cover is infinite. For $\alpha\le 
\frac\pi4$, this example shows 
that when the boundary is not compact, even if the stability operator is nonnegative, the 
total curvature can be infinite.
\end{exam}

\section{Classification of one-ended examples}\label{sec:one-ended}

This section is devoted to the proof of Theorem \ref{th:one-ended}. The idea of the proof is based on a symmetrization procedure as in Schoen's paper~\cite{Sch2}.

After a rotation, we can assume that the end of $\Sigma$ is the graph of a function $f$ over the outside of a compact set with the following asymptotic:
$$
f(X)=A\ln|X|+B+O(|X|^{-1})\quad\text{if }n=2,
$$
with $A\ge0$ and, if $A=0$, $B\ge0$, and
$$
f(X)=B+A|X|^{-(n-2)}+O(|X|^{-(n-1)})\quad\text{if }n>2,
$$
with $B\ge0$.

The first step consists in proving that either $\Sigma=\{z=0\}\setminus\B=\boC_0$ or $A>0$ and $\Sigma\subset\{z>\eps\}$ for some $\eps>0$.

Observe that $\p\Sigma\subset\{z\ge-2\}$ and $\Sigma\setminus K\subset\{z\ge-2\}$ for some compact set $K\subset\R^{n+1}$. Then, by the maximum principle, $\Sigma\subset\{z\ge-2\}$. In fact, for each $t<0$, we have $\Sigma\setminus K\subset\{z\ge t\}$ (for a possibly different compact set $K$). Therefore, if $\Sigma\cap\{z<0\}\neq\varnothing$, we can start from $t=-2$ and let $t<0$ increase up to finding a first contact point in $\Sigma\cap\{z=t_0\}$ for some $t_0<0$. We notice that, since $\Sigma$ is free boundary, the first contact point cannot be at $\p\Sigma$. Then the maximum principle can be applied at the first contact point in order to guarantee that $\Sigma=\{z=t_0\}\setminus\B$, which is not free boundary.

This shows that $\Sigma\subset\{z\ge0\}$. Then either $\p\Sigma\subset\{z>0\}$ or $\p\Sigma$ has a point in $\{z=0\}$ and the boundary maximum principle can be applied so that $\Sigma=\{z=0\}\setminus\B$.

If $\partial\Sigma\subset\{z>0\}$, then we see that 
\[
-\int_{\partial\Sigma}\partial_\nu z=\int_{\partial\Sigma}(P,e_z)>0.
\]
Since $z$ is harmonic on $\Sigma$, using the asymptotic of $f$ we obtain that
\begin{align*}
0<\int_{\Sigma\cap\{|X|=R\}}\partial_\nu z 
&=\int_{\Sigma\cap\{|X|=R\}}(\nu,e_z)\\
&=\int_{\S^{n-1}}\left(-(n-2)A\frac1{R^{n-1}} 
R^{n-1}+O(R^{-1})\right)d\sigma\\
&=-(n-2)|\S^{n-1}| A+o(1)
\end{align*}
for $n>2$. The same estimate gives that $0<2\pi A+o(1)$ for $n=2$. Therefore, if $n=2$, 
we have $A>0$ and this implies that $f(X)>1$ for $|X|$ sufficiently large. If $n>2$, we 
have $A<0$ and, since $f\ge 0$, this implies that $B>0$ and $f(X)>\frac B2$ for $|X|$ 
sufficiently large. In any case, we obtain that $\Sigma\subset\{z\ge\eps\}$ for small 
positive $\eps$ since there cannot be any first contact point with $\{z=t\}$ for $0\le 
t\le\eps$. This ends the first step.

We fix a $(x,Y)$ coordinate system in $\R^n=\R\times\R^{n-1}$. We want to prove that $\Sigma$ is symmetric with respect to $\{x=0\}$. In order to do this, we are going to follow a symmetrization procedure.

Given $\theta\in[0,\frac\pi 2]$, let $\Pi_\theta$ be the hyperplane of equation $-x\sin\theta+z\cos\theta=0$, $S_\theta$ be the symmetry with respect to $\Pi_\theta$, and $\Sigma_\theta^-=\Sigma\cap\{-x\sin\theta+z\cos\theta\le0\}$. If $B_\rho$ is the ball centered at the origin of radius $\rho>0$, we notice that $S_\theta(B_\rho)=B_\rho$.

\begin{lem}\label{lem:sym}
Given $\theta\in[0,\frac\pi2)$, there exists $\rho_\theta>0$ such that, outside $B_{\rho_\theta}$, $S_\theta(\Sigma_\theta^-)$ is above $\Sigma$. Moreover, $\rho_\theta$ can be chosen as an increasing function of $\theta$.
\end{lem}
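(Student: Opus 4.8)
The plan is to use the asymptotic control of the end together with a moving-planes type argument at infinity, combined with the strict positivity statement $\Sigma\subset\{z\ge\eps\}$ established in the first step. First I would record the geometry: $\Pi_\theta$ is the hyperplane through the origin with unit normal $\eta_\theta=(-\sin\theta,0,\cos\theta)\in\R^n\times\R$, so that the signed distance of a point $P=(X,z)$ to $\Pi_\theta$ is $\langle P,\eta_\theta\rangle=-x\sin\theta+z\cos\theta$, and $\Sigma_\theta^-$ is the part of $\Sigma$ on the negative side. Since $S_\theta$ fixes the origin and is a linear isometry, it preserves each $B_\rho$, and $S_\theta(\Sigma_\theta^-)$ lies on the nonnegative side of $\Pi_\theta$. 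The assertion ``$S_\theta(\Sigma_\theta^-)$ is above $\Sigma$ outside $B_{\rho_\theta}$'' should be read as: at every point of $\Sigma$ outside $B_{\rho_\theta}$ lying on or below $\Pi_\theta$, the reflected sheet $S_\theta(\Sigma_\theta^-)$ is at least as high (in the $z$-direction, say, or equivalently in the $-\eta_\theta$-direction) as $\Sigma$ itself. The content is therefore purely asymptotic.

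Next I would exploit the regular-end expansion. Outside a large ball the end of $\Sigma$ is a graph $z=f(X)$ with $f$ bounded, $\nabla f\to0$, and — crucially from the first step — $f\ge\eps>0$ (or $f(X)\to+\infty$ in the logarithmic case $n=2$), while $\p\Sigma$ is compact. For $\theta\in[0,\tfrac\pi2)$ fixed, $\cos\theta>0$, so the function $-x\sin\theta+f(X)\cos\theta$ on $\{|X|\ge R\}$ behaves like $-|X|\sin\theta\cdot(x/|X|)+O(1)$; hence, for $|X|$ large, a point of the end lies on the negative side of $\Pi_\theta$ only when $x$ is large and positive, i.e. only in a cone around the positive $x$-axis that is uniformly transverse to $\Pi_\theta$. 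On that region the graph $\Sigma$ is, up to the small perturbation $\nabla f=o(1)$, nearly horizontal, while $\Pi_\theta$ has slope $\tan\theta$ bounded away from vertical; so the reflection $S_\theta$ maps the lower sheet strictly upward by an amount comparable to its distance to $\Pi_\theta$, which grows linearly in $|X|$. A short computation comparing $S_\theta(X,f(X))$ with the point of $\Sigma$ above the same horizontal position then shows the reflected sheet is strictly higher once $|X|\ge\rho_\theta$ for $\rho_\theta$ large enough; the only place one must be careful is that the horizontal projections of $\Sigma$ and of $S_\theta(\Sigma_\theta^-)$ overlap, which again follows from $\nabla f=o(1)$ and $\cos\theta>0$. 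Finally, monotonicity of $\rho_\theta$ in $\theta$ is arranged by simply taking, for each $\theta$, the supremum over $[0,\theta]$ of the radii produced above (or by noting directly that the transversality constants degrade monotonically as $\theta\nearrow\tfrac\pi2$), and then replacing $\rho_\theta$ by a fixed increasing majorant.

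The main obstacle is the uniformity as $\theta\to\tfrac\pi2$: the transversality of $\Pi_\theta$ to the horizontal direction is measured by $\cos\theta$, which tends to $0$, so the required radius $\rho_\theta$ blows up, and one must check that for each \emph{fixed} $\theta<\tfrac\pi2$ the estimates close up — this is exactly why the statement excludes $\theta=\tfrac\pi2$. Handling this cleanly means keeping track of how the error terms $O(|X|^{-1})$ (resp. $O(|X|^{-(n-1)})$) and $\nabla f=o(1)$ compare with the main linear term $|X|\sin\theta$: for fixed $\theta$ the linear term dominates for $|X|$ large, giving $\rho_\theta$, but the threshold degenerates like $(\cos\theta)^{-1}$ times a constant. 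The positivity $f\ge\eps$ (or $f\to+\infty$) from the first step is what rules out the reflected sheet dipping back below $\Sigma$ near the ``seam'' $\Pi_\theta\cap\Sigma$, and together with the compactness of $\p\Sigma$ it guarantees no boundary interference outside $B_{\rho_\theta}$. Once these asymptotic comparisons are in place, the lemma follows.
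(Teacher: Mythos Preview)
Your approach---compare the reflected graph point $(X',z')=S_\theta(X,f(X))$ directly with $f(X')$---is different from the paper's, and it contains a real gap. The comparison requires $X'$ to lie in the domain $\{|X|\ge R\}$ where the end is a graph, and your mean-value control of $f(X')-f(X)$ via $\nabla f=o(1)$ needs the whole segment $[X,X']$ to stay there. But $X'=(x\cos2\theta+f(X)\sin2\theta,\,Y)$, so for $\theta$ near $\tfrac\pi4$ and $|Y|$ small one can have $|X|\to\infty$ while $|X'|$ remains bounded (at $\theta=\tfrac\pi4$, $Y=0$, $n>2$ one gets $X'=(f(X),0)$ with $f$ bounded). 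Your remark that the overlap of projections ``follows from $\nabla f=o(1)$ and $\cos\theta>0$'' does not address this: $\nabla f$ controls the tilt of $\Sigma$, not where the reflected footpoint lands. The gap is repairable---when $|X'|$ is small the signed distance $|h|$ to $\Pi_\theta$ is necessarily large, hence $z'=f(X)+2|h|\cos\theta$ is large and the reflected point lies above the compact part of $\Sigma$---but this case split is absent from your outline. Incidentally, your description of $\Sigma_\theta^-$ as lying ``in a cone around the positive $x$-axis'' is also inaccurate: the condition $x\ge f(X)\cot\theta$ with $f$ bounded is half-space-like, and points with $|Y|$ arbitrarily large and moderate $x$ belong to $\Sigma_\theta^-$.

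The paper avoids the difficulty by working on spheres rather than on horizontal planes. Since $S_\theta$ preserves each $\partial B_\rho$, one parametrizes $\Sigma\cap\partial B_\rho$ as $\big((1-g^2/\rho^2)^{1/2}X,\,g(X)\big)$ over the equator $\{|X|=\rho,\ z=0\}$, with tangential derivative $dg=O(\rho^{-2})$. The reflection of $P\in\Sigma_\theta^-\cap\partial B_\rho$ is reached by a rotation $R_t$ in the $(x,z)$-plane, and an elementary plane-geometry observation shows the chord $\overrightarrow{PR_t(P)}$ makes angle at most $\theta$ with the vertical, hence at least $\tfrac\pi2-\theta$ with the horizontal. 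On the other hand, any chord joining two points of $\Sigma\cap\partial B_\rho$ makes angle at most $C\rho^{-2}$ with the horizontal, by the gradient bound on $g$. For $\rho$ large these are incompatible, so $R_t(P)\notin\Sigma$ for all $0<t\le2(\theta-\alpha)$; the argument never leaves $\partial B_\rho$, and no comparison of graph values over a possibly distant footpoint is required. The resulting threshold $\rho_\theta$ (from $C\rho_\theta^{-2}<\tfrac\pi2-\theta$) is automatically increasing in $\theta$.
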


\begin{proof}
In $\R^2$, let $p=(a\cos\alpha,a\sin\alpha)$ be a point with $a>0$ and 
$0\le\alpha\le\theta$ and $R_t$ be the rotation of angle $t$. If $0\le 
t\le2(\theta-\alpha)$, then the angle between $\overrightarrow{pR_t(p)}$ and the 
vertical $z$-axis is $\alpha-\frac t2$ and then at most $\theta$ (see 
Figure~\ref{fig3}).

\begin{figure}[!ht]
\centering
\includegraphics[scale=.975]{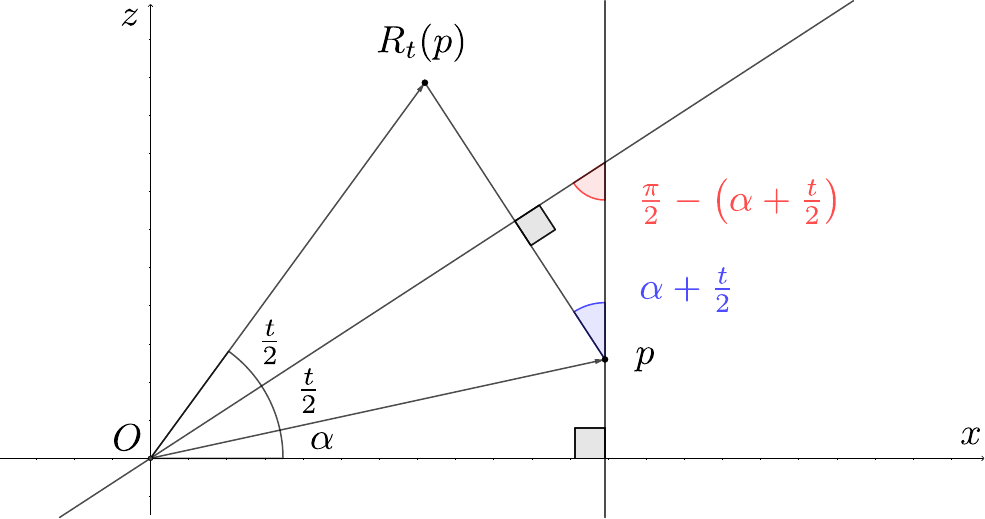}
\caption{Angle between $\protect\overrightarrow{pR_t(p)}$ and the $z$-axis \label{fig3}}
\end{figure}

Because of the asymptotic of $\Sigma$, the intersection $\Sigma\cap\partial B_\rho$ can be parametrized by $\partial B_\rho\cap\{z=0\}$ in the following way: there is a function $g$ such that
\[
\Sigma\cap\partial 
B_\rho=\big\{\big((1-\frac{g^2(X)}{|X|^2})^{1/2}X,g(X)\big);X\in\partial B_\rho\cap\{z=0\}\big\},
\]
where $g$ satisfies
\begin{gather*}
g(X)=A\ln|X|+B+O(|X|^{-1})\quad\text{if }n=2,\\
g(X)=B+A|X|^{-(n-2)}+O(|X|^{-(n-1)})\quad\text{if }n>2,
\end{gather*}
and
\begin{equation}\label{eq:grad}
dg(X)(V)=O(|X|^{-2})|V|
\end{equation} 
for any vector $V$ tangent to $\partial B_{|X|}$.

In the $(x,Y,z)$ coordinates, we can extend the rotation $R_t$ to $\R^{n+1}$ by fixing the $Y$ coordinates. Let $\rho$ be large and $P\in\Sigma_\theta^-\cap\partial B_\rho$. We can write $P=\big((1-\frac{g^2(X)}{|X|^2})^{1/2}X,g(X)\big)$ for some $X=(x,Y)$. Then $S_\theta(P)$ is above $\Sigma$ if $R_t(P)$ does not meet $\Sigma$ for $0<t\le 2(\theta-\alpha)$, where $0\le\alpha\le\theta$ is such that
\[
\big((1-\frac{g^2(X)}{|X|^2})^{1/2}x,g(X)\big)=a(\cos\alpha,\sin\alpha)
\]
for some $a>0$ (see Figure~\ref{fig4}). In particular, $S_\theta(P)=R_{2(\theta-\alpha)}(P)$. We notice that $R_t(P)$ belongs to $\partial B_\rho$. Then, if $R_t(P)\in 
\Sigma$, we must have
\[
R_t(P)=\big((1-\frac{g^2(X')}{|X'|^2})^{1/2}X',g(X')\big)
\]
for some $X'\in\partial B_\rho\cap\{z=0\}$. 

\begin{figure}[!ht]
\centering
\includegraphics[scale=.975,trim={{.125\textwidth} 0 {.125\textwidth} 0},clip]{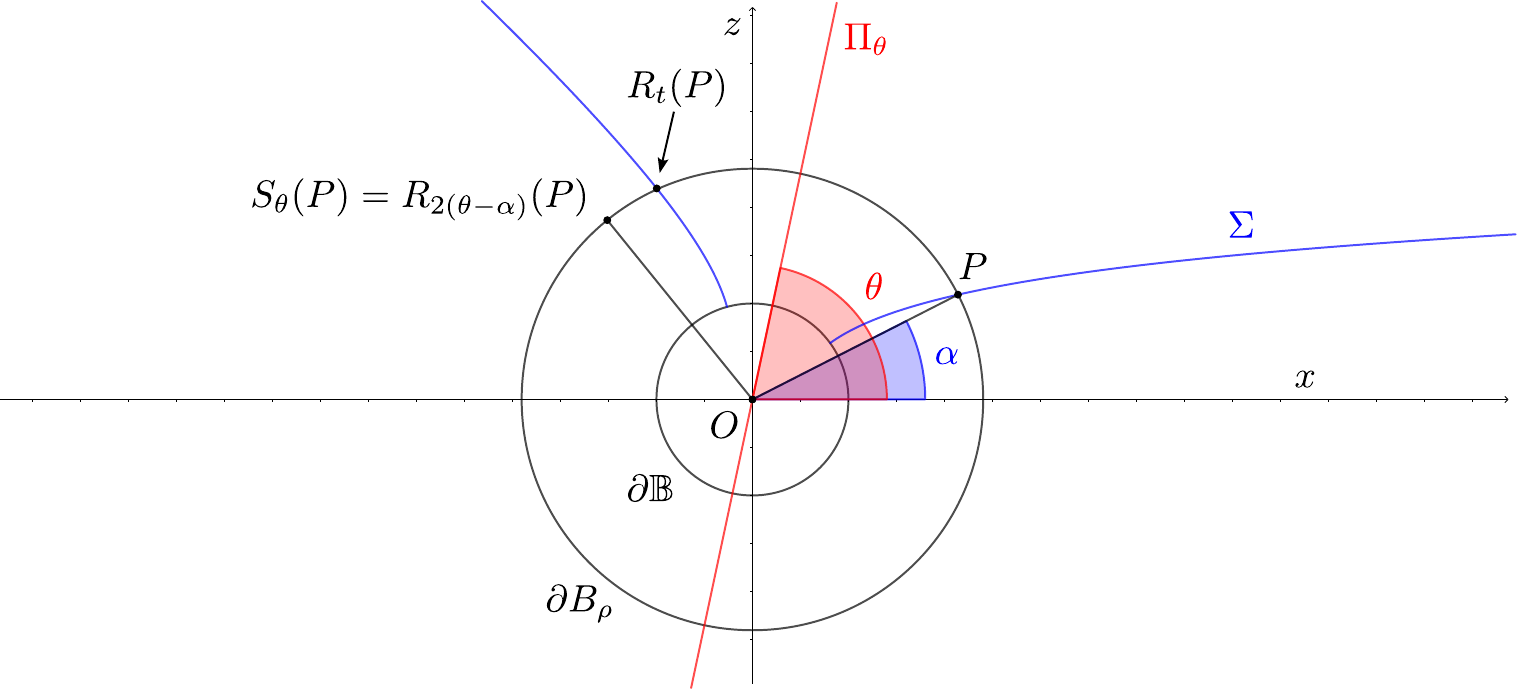}
\caption{Configuration if $S_\theta(P)$ is not above $\Sigma$ \label{fig4}}
\end{figure}

As a consequence, by integrating \eqref{eq:grad} along $\partial B_\rho\cap\{z=0\}$, we have
\begin{align*}
|g(X)-g(X')|\le C\rho^{-2}|X-X'|\le C'\rho^{-2}\Big|(1-\frac{g^2(X)}{|X|^2})^{1/2}X-(1-\frac{g^2(X')}{|X'|^2})^{1/2}X'\Big|.
\end{align*}
This implies that $\overrightarrow{PR_t(P)}$ makes an angle less than $C''\rho^{-2}$ with the horizontal plane $\{z=0\}$. Therefore, if $\rho_\theta$ is chosen such that this angle is less than $\frac\pi2-\theta$, we obtain a contradiction with $R_t(P)\in\Sigma$ and the lemma is proved.
\end{proof}

We are now ready to finish the proof of Theorem \ref{th:one-ended}. Let 
\[
T=\big\{\theta\in[0,\frac\pi2];S_\beta(\Sigma_\beta^-)\text{ is above }\Sigma\text{ for all }\beta\in[0,\theta]\big\}.
\]
Since $\Sigma\subset\{z>0\}$, we may choose $\theta_0>0$ small enough such that $\Sigma_{\theta_0}^-\subset\R^{n+1}\setminus B_{\rho_{\theta_0}}$. By Lemma~\ref{lem:sym}, we have $[0,\theta_0]\subset T$. The set $T$ is then a closed interval of the form $[0,\theta_1]$. Let us notice that when we symmetrize with respect to $\Pi_\theta$, the image of a point on $\partial B_\rho$ stays on $\partial B_\rho$, so that points in $\partial\Sigma$ cannot be sent to interior points of $\Sigma$ and interior points of $\Sigma$ cannot be sent to $\partial\Sigma$. 

Then, if $\theta_1<\frac\pi2$, by Lemma~\ref{lem:sym}, there is a point $P\in\Sigma_{\theta_1}^-$ such that one of the following occurs:
\begin{itemize}
\item $P\notin\Pi_{\theta_1}$, $S_{\theta_1}(P)\in\Sigma$ and $S_{\theta_1}(\Sigma_{\theta_1}^-)$ is on one side of $\Sigma$.
\item $P\in\Pi_{\theta_1}$, $\Sigma$ is orthogonal to $\Pi_{\theta_1}$ at $P$ and $S_{\theta_1}(\Sigma_{\theta_1}^-)$ is on one side of $\Sigma$.
\end{itemize}

In the first case, if $P\notin\partial\Sigma$, then maximum principle gives that $\Sigma$ is symmetric with respect to $\Pi_{\theta_1}$. If $P\in\partial\Sigma$, then, by free boundary hypothesis, $\Sigma$ and $S_{\theta_1}(\Sigma_{\theta_1}^-)$ are normal to $\partial\B$ and thus tangent, since $S_{\theta_1}(\Sigma_{\theta_1}^-)$ is on one side of $\Sigma$. As a consequence, the boundary maximum principle implies 
that $\Sigma$ is symmetric with respect to $\Pi_{\theta_1}$.

In the second case, if $P\notin\partial\Sigma$, the boundary maximum principle implies that $\Sigma$ is symmetric with respect to $\Pi_{\theta_1}$. If $P\in\partial\Sigma$, then as $S_{\theta_1}(\Sigma_{\theta_1}^-)$ is on one side of $\Sigma$, we can locally parametrize $\Sigma$ and 
$S_{\theta_1}(\Sigma_{\theta_1}^-)$ over a quarter of the tangent plane $T_P\Sigma$ by two functions $u$ and $v$ such that $u\le v$, $u(P)=v(P)$, and $\nabla u(P)=\nabla v(P)$. Moreover, at $P$, the tangent vector $P$ is an eigenvector of the second fundamental form of $\Sigma$ (see \eqref{eq:principal}). Thus the Hessian of $u$ and $v$ at $P$ coincide. So, applying Serrin's corner maximum principle \cite{Ser2} to $v-u$, we obtain that $u\equiv v$ and $\Sigma$ is symmetric with respect to $\Pi_{\theta_1}$.

In any case, we obtain that $\Pi_{\theta_1}$ is a plane of symmetry of $\Sigma$, which is not possible by Lemma~\ref{lem:sym}. This gives that 
$\theta_1=\frac\pi2$. Then $S_{\pi/2}(\Sigma\cap\{x\ge 0\})$ is above $\Sigma$. The same argument gives that $S_{\pi/2}(\Sigma\cap\{x\le 0\})$ is above $\Sigma$. As a consequence, $\Sigma$ is symmetric with respect to $\{x=0\}$.

By changing the coordinate system, we obtain that $\Sigma$ is symmetric with respect to any vertical hyperplane passing through the origin and then invariant by rotation around the vertical $z$-axis: $\Sigma$ is a catenoidal surface.

\appendix

\section{Harnack inequality}\label{ap:harnack}

In this paper, we are considering solutions $u$ to some elliptic equations on $\Sigma$ under the Robin boundary condition $\partial_\nu u+u=0$. Elliptic regularity theory for this condition can be found in \cite[Theorem~2.4.2.6]{Grs}. Besides, one can also remark that, if $d$ is a smooth function on $\Sigma$ with $\partial_\nu d=1$ (for example $-d$ could be the distance function to $\partial\Sigma$) and $v=e^d u$, then $\partial_\nu v=(\partial_\nu d)e^du+e^d\partial_\nu u=0$ and $v$ solves some elliptic equation. So results for Neumann boundary data can be translated to the Robin boundary condition.

In the proof of Proposition~\ref{prop:stab}, we use a Harnack inequality up to the boundary that can be derived from the following one.

\begin{prop}
Let $\Sigma$ be a Riemannian manifold with compact boundary and $u$ be a positive solution to
\[
\begin{cases}
\Delta u+(X,\nabla u)+qu=0&\text{ on }\Sigma,\\
\partial_\nu u=0&\text{ on }\partial\Sigma,
\end{cases}
\]
where $X$ is smooth vectorfield and $q$ a smooth function.
Then, given a compact domain $U\subset\Sigma$, 
there exists a constant $C>0$ (not depending on $u$) such that, for any 
$p,q\in U$, we have
\[
\frac{u(p)}{u(q)}\le C.
\]
\end{prop}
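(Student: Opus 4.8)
The plan is to reduce the statement to a classical interior Harnack inequality plus a boundary version, both applied to an auxiliary operator without first-order drift-on-the-boundary complications. The main point is that the equation is uniformly elliptic with smooth coefficients, so one may cover $U$ by finitely many small balls and chain local Harnack estimates; the only genuine issue is what to do near $\partial\Sigma$.

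First I would handle the interior. Fix a slightly larger compact domain $U'$ with $U\subset\operatorname{int}(U')\subset U'\subset\Sigma$. On the interior of $U'$ the operator $L u=\Delta u+(X,\nabla u)+qu$ is uniformly elliptic with bounded smooth coefficients, so by the classical interior Harnack inequality (\textit{e.g.} \cite[Corollary~8.21]{GiTr}, or the divergence-form version after absorbing the drift) there is, for each point away from the boundary, a local constant. By compactness of the set of interior points of $U$ staying at definite distance from $\partial\Sigma$, finitely many such balls suffice, and one chains them along paths to get a single constant controlling $u(p)/u(q)$ for all such $p,q$.

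Next I would treat a collar neighborhood of $\partial\Sigma$. Since $\partial\Sigma$ is compact and smooth, choose a tubular neighborhood and a smooth function $d$ on $\Sigma$ with $\partial_\nu d=1$ near $\partial\Sigma$; as noted in the appendix's preamble, this reduction is not even needed here because the boundary condition is already Neumann, but in any case the standard trick is to reflect. Using boundary normal coordinates, one can extend $u$ by even reflection across $\partial\Sigma$; because $\partial_\nu u=0$, the reflected function solves a uniformly elliptic equation (with coefficients that are merely bounded and measurable after reflection, but that is enough for De Giorgi--Nash--Moser) on a two-sided neighborhood. Then the interior Harnack inequality applies to the reflected function on balls centered at boundary points, yielding a local Harnack constant valid up to $\partial\Sigma$. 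Alternatively, one invokes directly a boundary Harnack inequality for the Neumann (or oblique-derivative) problem, which is standard for smooth boundaries. Either way, finitely many such boundary balls cover a collar of $\partial\Sigma\cap U$.

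Finally I would glue the two regimes: the interior balls and the boundary collar balls together form a finite cover of $U$ by sets on each of which a Harnack inequality holds, and on overlaps the comparisons are compatible up to constants. Chaining along a fixed finite graph of overlapping balls (connected since $U$ may be taken connected, or applied componentwise) produces the desired uniform constant $C$, independent of $u$ because each local constant depends only on the coefficients of $L$ and the geometry, not on $u$. The main obstacle is the boundary estimate: one must be slightly careful that the reflected coefficients remain uniformly elliptic and that the Moser iteration still applies with only bounded measurable coefficients, but this is exactly the content of the classical Neumann Harnack inequality on smooth domains, so no new difficulty arises.
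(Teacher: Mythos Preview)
Your argument is correct in outline, but it follows a genuinely different route from the paper. The paper does not use any covering, chaining, or reflection at all; instead it proves a \emph{gradient estimate} in the style of Li--Yau. Setting $v=\ln u$ and $w=|\nabla v|^2$, the authors multiply by a cutoff $\phi$ chosen so that $\partial_\nu(\phi w)<0$ on $\partial\Sigma$ (using the Neumann condition and the second fundamental form of $\partial\Sigma$), which forces the maximum of $\phi w$ to be interior. At that interior maximum they apply the Bochner formula together with the equation \eqref{eq:harnack1} for $\Delta v$ to obtain a polynomial inequality in $\phi w$, yielding a uniform bound $|\nabla\ln u|\le M$ on $U$; the Harnack inequality then follows by integrating along paths.

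Your approach trades this self-contained computation for an appeal to De~Giorgi--Nash--Moser on the reflected equation. That is legitimate: after even reflection in Fermi coordinates the Neumann condition makes the extension $C^1$, and the resulting coefficients are bounded measurable and uniformly elliptic, so the interior Harnack of \cite[Corollary~8.21]{GiTr} applies on doubled balls; chaining over a finite cover of the compact $U$ then gives the global constant. The advantage of your route is that it is modular and requires no curvature computations; the advantage of the paper's route is that it is entirely self-contained and sidesteps precisely the issue the authors flag, namely that a clean boundary Harnack statement for this Neumann problem on a Riemannian manifold with drift is not, in their view, directly citable from the literature. Your alternative ``or invoke directly a boundary Harnack inequality for the Neumann problem'' is thus the weaker of your two options here; the reflection argument is the one that actually closes.
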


No such statement seems to appear in the literature. Similar results appear in \cite{Che2,Wan}, however they are not directly applicable here because of certain hypotheses.

\begin{proof}
In order to prove such an estimate, it is enough to prove an upper bound on $|\nabla\ln u|$. Let $v=\ln u$. We have
\begin{equation}\label{eq:harnack1}
\Delta v=\frac{\Delta u}u-\frac{|\nabla u|^2}{u^2}=-q-(X,\nabla v)-|\nabla 
v|^2.
\end{equation}
Now, let $w=|\nabla v|^2$ and consider a nonnegative function $\phi$ with compact support such that $\phi=1$ on $\partial\Sigma$ and $\phi\ge 1$ on $U$. Let consider $p$ a point of maximum of $\phi w$. We notice that $\partial_\nu v=0$, so $\nabla v$ is tangent to $\partial\Sigma$. Thus
\begin{align*}
\partial_\nu(\phi w)&=w\partial_\nu\phi+2\phi(\nabla_\nu\nabla v,\nabla v)\\
&=w\partial_\nu\phi+2\phi (\nabla_{\nabla v}\nabla v,\nu)\\
&=w\partial_\nu\phi+2\phi B_{\partial\Sigma}(\nabla v,\nabla v)\\
&\le w(\partial_\nu\phi+2H),
\end{align*}
where $B_{\partial\Sigma}$ is the second fundamental form of $\partial\Sigma$ and $H$ is an upper bound for the principal curvatures of $\partial\Sigma$. So, choosing $\phi$ such that $\partial_\nu\phi+2H<0$, we can ensure that $\partial_\nu(\phi w)$ is negative. Thus the maximum 
cannot be on the boundary of $\Sigma$. Let us compute $\Delta(\phi w)$ by using Bochner formula:
\begin{align*}
\Delta(\phi w)&=w\Delta \phi+2(\nabla \phi,\nabla w)+\phi\Delta(|\nabla v|^2)\\
&=w\Delta \phi+2(\nabla \phi,\nabla w)+2\phi\left[(\nabla v,\nabla \Delta 
v)+|\nabla^2 v|^2+\Ric(\nabla v,\nabla v)\right]\\
&\ge w(\Delta \phi-2K\phi)+2(\nabla \phi,\nabla 
w)+2\phi|\nabla^2v|^2+2\phi(\nabla v,\nabla(-q-(X,\nabla v)-w)),
\end{align*}
where $-K$ is a lower bound on the Ricci tensor. We also have 
\begin{align*}
(\nabla v,\nabla(X,\nabla v))&=(\nabla_{\nabla v}X,\nabla v)+(X,\nabla_{\nabla 
v}\nabla v)\\
&\le Kw+(\nabla v,\nabla_X\nabla v)\\
&=Kw+\frac12(X,\nabla w),
\end{align*}
where $K$ is also chosen to be an upper bound for the tensor $(\nabla_{\displaystyle\cdot}X,\cdot)$.

At $p$, we have $0=w\nabla \phi+\phi\nabla w$, that is, $\nabla w=-w\frac{\nabla 
\phi}\phi$. Thus
\begin{align*}
\Delta(\phi w)&\ge w(\Delta\phi-4K\phi) 
+2\phi|\nabla^2v|^2-2\phi(\nabla v,\nabla q)\\
&\qquad\qquad-2\frac{|\nabla\phi|^2}\phi w+w(X,\nabla \phi)+2w(\nabla v,\nabla \phi).
\end{align*}

At $p$, $\Delta(\phi w)\le 0$, and so
\begin{equation}\label{eq:harnack2}
0\ge2\phi|\nabla^2v|^2+ w(\Delta\phi-4K\phi-2\frac{|\nabla 
\phi|^2}\phi+(X,\nabla \phi))-2|\nabla\phi|w^{3/2}-2\phi|\nabla q|w^{1/2}.
\end{equation}

We have $|\nabla^2 v|\ge \frac1{\sqrt n}|\Delta v|$. Then, by \eqref{eq:harnack1}, we have
\[
|\nabla^2 v|^2\ge \frac1 n|\Delta v|^2=\frac1n (w^2-aw^{3/2}-bw-cw^{1/2}-d)
\]
for some constants $a$, $b$, $c$ and $d$ which depend only on $X$ and $q$. Thus, combining with \eqref{eq:harnack2} and multiplying by $\phi(p)$, we obtain at $p$,
\[
0\ge \frac2n(\phi w)^2-A(\phi w)^{3/2}-B(\phi w)-C(\phi w)^{1/2}-D
\]
for some constants $A$, $B$, $C$ and $D$ which depend only on $K$, $X$, $q$ and $\phi$. This implies that $\phi(p) w(p)\le M$ for some constant $M=M(A,B,C,D)$ and thus 
$w(q)\le M$ for $q\in U$, since $\phi\ge 1$ on $U$.
\end{proof}

As a consequence, we have the following Harnack inequality for the Robin boundary condition.

\begin{prop}\label{prop:harnack}
Let $\Sigma$ be a Riemannian manifold with compact boundary and $u$ be a positive solution to
\[
\begin{cases}
\Delta u+qu=0&\text{ on }\Sigma,\\
\partial_\nu u+u=0&\text{ on }\partial\Sigma,
\end{cases}
\]
where $q$ is a smooth function.
Then, given a compact domain $U\subset\Sigma$, there exists a constant $C>0$ 
(not depending on $u$) such that, for any $p,q\in U$, we have
\[
\frac{u(p)}{u(q)}\le C.
\]
\end{prop}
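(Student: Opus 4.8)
The plan is to reduce Proposition~\ref{prop:harnack}, which concerns the Robin condition $\p_\nu u+u=0$, to the Neumann case already established in the previous proposition. As indicated in the preamble to the appendix, the key device is a change of unknown that absorbs the boundary term. Concretely, I would first fix a smooth function $d$ on $\Sigma$ with $\p_\nu d=1$ on $\p\Sigma$; since $\p\Sigma$ is compact, such a $d$ exists (for instance, take $-d$ to be the distance to $\p\Sigma$ near the boundary, then cut off and extend arbitrarily to a global smooth function, which is legitimate because the Harnack estimate is local and only the behaviour of $d$ near $\p\Sigma$ matters, together with smoothness elsewhere). Set $v=e^{d}u$. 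Then $v>0$ wherever $u>0$, and on $\p\Sigma$ one computes $\p_\nu v=(\p_\nu d)e^{d}u+e^{d}\p_\nu u=e^{d}u+e^{d}(-u)=0$, so $v$ satisfies the Neumann condition.

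Next I would derive the interior equation satisfied by $v$. Writing $u=e^{-d}v$ and substituting into $\Delta u+qu=0$, the Leibniz rule gives
\[
\Delta u=e^{-d}\bigl(\Delta v-2(\nabla d,\nabla v)+(|\nabla d|^2-\Delta d)v\bigr),
\]
so that $v$ solves
\[
\Delta v-2(\nabla d,\nabla v)+\bigl(|\nabla d|^2-\Delta d+q\bigr)v=0
\]
on $\Sigma$. This is exactly of the form $\Delta v+(\widetilde X,\nabla v)+\widetilde q\,v=0$ treated in the preceding proposition, with smooth vector field $\widetilde X=-2\nabla d$ and smooth function $\widetilde q=|\nabla d|^2-\Delta d+q$, and with the Neumann condition $\p_\nu v=0$ on $\p\Sigma$.

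Finally, applying the previous proposition to $v$ on the compact domain $U$, we obtain a constant $C'>0$, independent of $v$ (hence independent of $u$, since the coefficients $\widetilde X,\widetilde q$ depend only on $d$ and $q$, not on $u$), such that $v(p)/v(q)\le C'$ for all $p,q\in U$. Translating back,
\[
\frac{u(p)}{u(q)}=\frac{e^{-d(p)}v(p)}{e^{-d(q)}v(q)}\le e^{\,d(q)-d(p)}C'\le C,
\]
where $C=C'\exp\bigl(\mathrm{osc}_{U}d\bigr)$ is finite because $d$ is continuous and $U$ is compact. I do not expect a serious obstacle here: the only mild point to get right is that the constant produced by the previous proposition genuinely does not depend on the solution — which is exactly how that proposition is phrased — and that the auxiliary function $d$ can be chosen globally smooth, which follows from compactness of $\p\Sigma$ and a routine cutoff argument. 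The rest is the substitution above.
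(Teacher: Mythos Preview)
Your proof is correct and follows essentially the same approach as the paper: substitute $v=e^{d}u$ with $\partial_\nu d=1$ to convert the Robin condition into a Neumann condition, compute the resulting equation for $v$, and apply the previous proposition. Your write-up is in fact slightly more careful than the paper's (which contains minor typos in the computation of $\Delta v$), and your explicit handling of the constant via $\exp(\mathrm{osc}_U d)$ is a nice addition.
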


\begin{proof}
As explained above, let $d$ be a smooth function on $\Sigma$ such that $\partial_\nu d=1$ and $v=e^{d}u$. We then have 
$\partial_\nu v=(\partial_\nu d)e^du+e^d\partial_\nu u=0$. We also have 
\begin{align*}
\Delta v&=e^d(u|\nabla d|^2+u\Delta d+2(\nabla u,\nabla d)+d\Delta u)\\
&=2(\nabla v,\nabla d)+(\Delta d-|\nabla d|^2-dq)v.
\end{align*}
So the above proposition applies to $v$ and gives the expected result for $u$.
\end{proof}

\bibliographystyle{amsplain}
\bibliography{bibliography.bib}

\end{document}